\documentclass[reqno]{amsart}
\usepackage{amsmath, amsthm, amscd, amsfonts, amssymb, graphicx, color}
\usepackage[bookmarksnumbered,plainpages]{hyperref}

\newtheorem{thm}{Theorem}[section]
\newtheorem{lem}[thm]{Lemma}
\newtheorem{prop}[thm]{Proposition}
\newtheorem{cor}[thm]{Corollary}
\theoremstyle{defn}

\theoremstyle{rem}

\numberwithin{equation}{section}
\textheight 21truecm \textwidth 16.5truecm
\setlength{\oddsidemargin}{0.0in}\setlength{\evensidemargin}{0.0in}

\def\bit{\begin{itemize}}
\def\eit{\end{itemize}}
\def\beq*{\begin{eqnarray*}}
\def\eeq*{\end{eqnarray*}}
\newcommand{\G}{\mathbb{G}}

\newcommand{\CL}{{\rm C}_0(\G)}

\newcommand{\MBL}[1]{\mathcal{B}_{l}^{\sigma}(#1)}
\newcommand{\MBR}[1]{\mathcal{B}_{r}^{\sigma}(#1)}
\newcommand{\MG}{{\rm M}(\G)}
\newcommand{\LL}{{\rm L}^1(\G)}
\newcommand{\LG}{{\rm L}^\infty(\G)}
\newcommand{\WAP}{{\rm WAP}(\G)}
\newcommand{\LUC}{{\rm LUC}(\G)}
\newcommand{\RUC}{{\rm RUC}(\G)}

\newcommand{\M}{\mathfrak{M}}
\newcommand{\vt}{\bar{\otimes}}
\newcommand{\no}[1]{\|#1\|}
\newcommand{\fs}{\hspace{0.3cm}}

\begin{document}
\title[Module homomorphisms and multipliers]{Module homomorphisms and multipliers on locally compact quantum groups }
\author{M. Ramezanpour}
\address{Department of Pure Mathematics, Ferdowsi University of Mashhad, P. O. Box 1159, Mashhad 91775, Iran.}
\email{mo\_\ ra54@stu-math.um.ac.ir}
\author{H.\ R.\ E.\ Vishki }
\address{Department of Pure Mathematics and Centre of Excellence
in Analysis on Algebraic Structures (CEAAS), Ferdowsi University of Mashhad\\
P. O. Box 1159, Mashhad 91775, Iran.} \email{vishki@ferdowsi.um.ac.ir}

\subjclass[2000]{22D15, 22D25, 43A22, 46H25, 46L65, 46L89, 81R50}
\keywords{Locally compact quantum group, module homomorphism, Wendel's theorem, Hopf-von Neumann algebra, multiplier, topological centre}
\maketitle
\centerline{\it (A tribute to Professor M.A. Pourabdollah on the occasion of his 65th birthday)}
\begin{abstract}
For a Banach algebra $A$ with a bounded approximate identity, we investigate  the  $A$-module homomorphisms of certain introverted subspaces of $A^*$,  and  show that  all  $A$-module homomorphisms of $A^*$ are  normal if and only if $A$ is an ideal of $A^{**}$. We obtain some characterizations of compactness and discreteness for a locally compact quantum group $\G$. Furthermore, in the co-amenable case we prove that the multiplier algebra of $\LL$ can be identified with $\MG.$ As a consequence, we prove that $\G$ is compact if and only if $\LUC={\rm WAP}(\G)$ and $\MG\cong\mathcal{Z}({\rm LUC}(\G)^*)$; which partially answer a problem raised by Volker Runde.
\end{abstract}

\section{ Introduction}
In the realm of Hopf-von Neumann algebras, the concept of locally compact quantum group was first introduced by Kustermans and Vaes in \cite{K-V} and \cite{K-V.von}. They used a set of comprehensive axioms in their definition  which cover the notion of Kac algebras, ~\cite{E-S}, and thus all locally compact groups. Some of the most famous locally compact quantum groups---which have  been extensively studied in abstract harmonic analysis---are  ${\rm L}^\infty(G)$ and ${\rm VN}(G)$ which, in some sense, are the dual of each others. In spite of  these two algebras have different  features in abstract harmonic analysis but they have mostly a unified framework from the locally compact quantum group point of view.

In  \cite{Run} and  \cite{R_uniform}   Runde  reformulated  some existing results in abstract harmonic analysis---related to the group algebra ${\rm L}^1(G)$ and the Fourier algebra ${\rm A}(G)$---in the language of locally compact quantum groups. For instance, he showed that for a locally compact quantum group $\G$, the algebra $\LL$ is an ideal in $\LL^{**}$ if and only if $\G$ is compact, \cite[Theorem 3.8]{Run}. The main theme of the present paper is to  unify some other existing results in abstract harmonic analysis---related to the  module homomorphisms, multipliers and topological centre ---in the language of locally compact quantum groups.

The paper is organized as follows. Section~\ref{mh} deals with the  module homomorphisms. For a left introverted subspace $X$ of $A^*$, in which, $A$ is a Banach algebra enjoying an approximate identity bounded by one, it is proved that if $X\subseteq A^*\cdot A$,  then the algebra of all right $A$-module homomorphisms can be identified with $X^*;$ from which we characterize  the normal (i.e. w*-w*-continuous) $A$-module homomorphisms of $A^*$. In particular, we show that a co-amenable locally compact quantum group $\G$ is compact if and only if  all $\LL$-module homomorphisms of $\LG$ are normal; and that $\G$ (not necessarily co-amenable) is discrete if and only if all $\CL$-module homomorphisms of $\MG$ are normal. The results of this section cover many results of ~\cite{L_oper} and \cite{L_uni}.

In Sections~\ref{multi} we show that for a co-amenable locally compact quantum group $\G$,  the left (and also right)  multiplier algebras  of $\LL$ can be identified with $\MG.$ This extends both classical results of ~\cite{Derigh, Wend} about the multiplier algebras of ${\rm L}^1(G)$ and ${\rm A}(G)$. Some interrelations between the multipliers and the operators which commute with translations are also investigated. As a consequence some results of ~\cite{B-E} are extended.

Finally in section \ref{embed}, we show that if $\G$ is a co-amenable locally compact quantum group, then there exists an isometric homomorphism $\Theta$ from $\MG$ into $\LUC^*$ such that $\Theta$ is onto if and only if $~\G$ is compact. We use this to partially answer a problem raised by  Runde, \cite{R_uniform}: if $~\G$ is co-amenable then $\G$ is compact if and only if $\LUC={\rm WAP}(\G)$ and $\Theta(\MG)=\mathcal{Z}(\LUC^*)$. We show that some of the main results of \cite{L_cont} and \cite{L-LOS} are unified by the results of this section.

\section{Left introverted subspaces and module homomorphisms }\label{mh}
Let $A$ be a Banach algebra.
For $\omega\in {A}$ and $x\in {A}^*$, we define the elements $x\cdot\omega$ and $\omega\cdot x$ of ${A}^*$
by the formula
$$ x\cdot\omega(\nu)= x(\omega\nu)\fs{\rm and}\fs \omega\cdot x(\nu)=x(\nu\omega)\fs(\nu\in A).$$
These are the natural module actions of $A$ on $A^*$ which turns $A^*$ into a Banach $A$-module.
A closed subspace $X$ of $A^*$ is called left invariant  if
$X\cdot A\subseteq X$, where $X\cdot A=\{x\cdot\omega : x\in X, \omega\in A\}$.
A left invariant  subspace $X$ is called  left introverted if $X^*\odot X\subseteq X$; in which, the elements $m\odot x$ of $A^*$ are defined so that
$$ m\odot x(\omega)=m(x\cdot\omega)\fs\fs (\omega\in A, m\in X^* , x\in X).$$
A right introverted subspace of $A^*$ can be defined similarly.
Let $X$ be a left introverted subspace of $A^*$, then the following   Arens (type)  product on $X^*$
$$(m\odot n)(x)=m(n\odot x)\qquad (m, n\in X^*, x\in X),$$
turns it into a Banach algebra.  We define the   topological centre $\mathcal{Z}(X^*)$ of $X^*$ by
 $$\mathcal{Z}(X^*):=\{~n\in X^*\fs: ~m\mapsto n\odot m \fs {\rm is~ normal~ on~ } X^*~\}.$$
clearly the product $\odot$ on $X=A^*$ gives the so-called first Arens product on $A^{**}$.

We use $\langle A^*\cdot A\rangle$ and $\langle A\cdot A^*\rangle$ to denote the closed linear spans of $A^*\cdot A$ and $A\cdot A^*$, respectively. It can be readily verified that $\langle A^*\cdot A\rangle$ and $\langle A\cdot A^*\rangle$ are  left and  right introverted in $A^*$, respectively. In the case where $A$ has a bounded approximate identity (=BAI), Cohen Factorization Theorem, \cite[Corollary 2.9.26]{Dal00}, implies that $\langle A^*\cdot A\rangle=A^*\cdot A$ and $\langle A\cdot A^*\rangle=A\cdot A^*$. It is known that the weakly almost periodic elements ${\rm WAP}(A)$ of $A^*$, is also an  introverted  subspace of $A^*$.

Let $\G=(\M,\Gamma,\varphi,\psi)$ be a von Neumann  algebraic locally compact quantum group as described in ~\cite{K-V, K-V.von}. By definition, the pair $(\M,\Gamma)$ is a Hopf-von Neumann algebra and $\varphi, \psi$ are left and right invariant normal semifinite faithful weights on $\M$, respectively.
Since $\Gamma:\M\to\M\vt\M$  is a normal, unital $*$-homomorphism
 satisfying $(\iota\otimes\Gamma)\Gamma=(\Gamma\otimes\iota)\Gamma$, it is well known that its pre-adjoint induces an associative multiplication  $\ast$ on  $\M_*$. Here $\vt$ denotes the von Neumann algebra tensor product. It is worthwhile mentioning that in the two classical case $\G_a=({\rm L}^\infty(G),\Gamma_a,\varphi_a,\psi_a)$ and
$\G_s=({\rm VN}(G),\Gamma_s,\varphi_s,\psi_s )$, \cite{E-S}, the product $\ast$ imposed on the preduals is just the usual convolution on ${\rm L}^1(G)$ and the pointwise multiplication on ${\rm A}(G)$, respectively.

Analogue to the locally compact group case, for a locally compact quantum group $\G$ we write $\LG$ for $\M,$ $\LL$ for $\M_*,$ $\LUC$ for $\langle\LG\cdot\LL\rangle$, $\RUC$ for $\langle\LL\cdot\LG\rangle$ and $\WAP$ for ${\rm WAP}(\LL)$. Note that, in this case  the $\LL$-module actions of $\LG$ can be presented  by
$$\omega\cdot x=(\iota\otimes\omega)(\Gamma x) \ \ {\rm and}\ \ x\cdot\omega =(\omega\otimes \iota)(\Gamma x) \qquad (\omega\in\LL, x\in\LG).$$

Let $\G$ be a locally compact quantum group and $(\CL,\Gamma_c,\varphi_c,\psi_c)$ be the reduced $C^*$-algebraic quantum group of $\G$ as \cite[Proposition 1.7]{K-V.von}. We also write $\MG$ for the  dual $\CL^*$ of $\CL$. Then $\MG$ is a Banach algebra under the product
$\ast$ given by
$(m\ast n)(x)=(m\otimes n)\Gamma_c(x)$\ ($m,n\in \MG, x\in\CL$).

Examining these algebras for the classical cases $\G_a$ and $\G_s$ yield some well-known algebras on the group $G$. More precisely, ${\rm L}^\infty(\G_a)$, ${\rm L}^1(\G_a)$, ${\rm LUC}(\G_a)$, ${\rm RUC}(\G_a)$, ${\rm WAP}(\G_a)$, ${\rm C}_0(\G_a)$ and ${\rm M}(\G_a)$ coincide with ${\rm L}^\infty(G)$, ${\rm L}^1(G)$, ${\rm LUC}(G)$, ${\rm RUC}(G)$, ${\rm WAP}(G)$, ${\rm C}_0(G)$ and  ${\rm M}(G)$, respectively, as explicitly introduced in \cite{H-R}. While ${\rm L}^\infty(\G_s)$, ${\rm L}^1(\G_s)$, ${\rm LUC}(\G_s)={\rm RUC}(\G_s)$, ${\rm WAP}(\G_s)$,   ${\rm C}_0(\G_s)$ and ${\rm M}(\G_s)$ coincide with  ${\rm VN}(G)$, ${\rm A}(G)$, $UCB(\hat{G})$, $W(\hat{G})$, ${\rm C}_\rho^*(G)$ and $ {\rm B}_\rho(G)$, respectively, as described in \cite{Eym, L_uni}.\\

For a locally compact group $G$, it is obvious that the Arens product on ${\rm C}_0(G)^*$ and the usual convolution product on ${\rm M}(G)$  coincide. Also   as it is shown that in \cite[Proposition 5.3]{L_uni},  the Arens product on ${\rm C}_\rho^*(G)^*={\rm B}_\rho(G)$ is precisely the pointwise product on it . Following we show that the same is also true for a general locally compact quantum group $\G$.

\begin{prop}\label{MG}
 For every locally compact quantum group $\G$, the products $\odot$ and $\ast$ on $\MG$ coincide.
\end{prop}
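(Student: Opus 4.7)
The plan is to realise $\MG=\CL^*$ as the dual of a left introverted subspace of $\LL^*=\LG$ and then compute the Arens product explicitly via slice maps of the coproduct, reducing it directly to the convolution $\ast$.

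First, I would check the set-up: since $\CL\subseteq\LG$ and $\Gamma_c$ is the restriction of $\Gamma$ to $\CL$, the standard Kustermans-Vaes fact $(\omega\otimes\iota)\Gamma(\CL)\subseteq\CL$ for every $\omega\in\LL$ says exactly that the right $\LL$-module action $x\cdot\omega=(\omega\otimes\iota)\Gamma x$ leaves $\CL$ invariant. Consequently $\CL$ is a left invariant (in fact left introverted) subspace of $\LG$, so the Arens product $\odot$ on $\MG=\CL^*$ described in Section~\ref{mh} is well defined, and for $n\in\MG$, $x\in\CL$ the element $n\odot x$ sits in $\CL$.

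The heart of the argument is then the slice-map identity $n\odot x=(\iota\otimes n)\Gamma_c x$ in $\CL$, for every $n\in\MG$ and $x\in\CL$. This is verified by pairing with an arbitrary $\omega\in\LL$ and unwinding the definitions:
$$(n\odot x)(\omega)=n(x\cdot\omega)=n\bigl((\omega\otimes\iota)\Gamma_c x\bigr)=(\omega\otimes n)\Gamma_c x=\omega\bigl((\iota\otimes n)\Gamma_c x\bigr).$$
Granted this identity, for $m,n\in\MG$ and $x\in\CL$ one finds
$$(m\odot n)(x)=m(n\odot x)=m\bigl((\iota\otimes n)\Gamma_c x\bigr)=(m\otimes n)\Gamma_c x=(m\ast n)(x),$$
which gives the asserted equality of products.

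The only subtle point, and the one I would take most care with, is the first paragraph: namely that $\CL$ really is left introverted in $\LG$ in the sense of Section~\ref{mh}, so that the Arens product is legitimately defined and so that $n\odot x$ can be fed to $m\in\CL^*$. Once the Kustermans-Vaes invariance of $\CL$ under the slice maps of $\Gamma$ is invoked, both the introvertedness and the identification of $n\odot x$ with $(\iota\otimes n)\Gamma_c x$ follow routinely, and the remainder of the proof is the two-line calculation above.
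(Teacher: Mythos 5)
Your proof is correct, but it takes a genuinely different route from the paper's. The paper does not compute the Arens product directly: it first observes that $(\iota\otimes n)\Gamma_c(x)$ and $(m\otimes\iota)\Gamma_c(x)$ lie in $\CL$ (citing Kustermans--Vaes, Section 4), deduces that $\ast$ is w*-separately continuous on $\MG$, obtains introversion of $\CL$ from its invariance together with the inclusion $\CL\subseteq\WAP$ so that $\odot$ is also w*-separately continuous, and then concludes by noting that the two products agree on $\LL$, which is w*-dense in $\MG$. You instead prove the slice-map identity $n\odot x=(\iota\otimes n)\Gamma_c x$ for all $n\in\MG$ and $x\in\CL$ and read off $(m\odot n)(x)=m\bigl((\iota\otimes n)\Gamma_c x\bigr)=(m\otimes n)\Gamma_c x=(m\ast n)(x)$ pointwise. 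Your argument is more direct and self-contained: it yields the introversion of $\CL$ as a byproduct of the same Kustermans--Vaes fact (rather than via $\CL\subseteq\WAP$) and dispenses with the density argument altogether; the price is that you must justify the Fubini-type identity $n\bigl((\omega\otimes\iota)\Gamma_c x\bigr)=\omega\bigl((\iota\otimes n)\Gamma_c x\bigr)$ for iterated slices of $\Gamma_c x\in\mathcal{M}(\CL\otimes\CL)$ with one normal and one merely bounded functional, which is standard and is in any case the same circle of facts from Kustermans--Vaes that the paper's proof already invokes. The paper's density argument, by contrast, buys a slightly softer proof at the cost of importing the result $\CL\subseteq\WAP$ from Runde's paper.
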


\begin{proof}
For every  $m,n\in \MG$ and $x\in\CL$ we have $(m\ast n)(x)=m((\iota\otimes n)\Gamma_c (x))=n((m\otimes\iota)\Gamma_c(x)).$
Since  $(\iota\otimes n)\Gamma_c(x)$ and  $(m\otimes\iota)\Gamma_c(x)$ lie in  $\CL$, \cite[Section 4]{K-V}, the product $*$ on $\MG$ is w*-separate continuous. A direct verification shows that $\CL$ is an invariant subspace of $\LG$  and so the inclusion $\CL\subseteq \WAP$, \cite[Theorem 4.3]{R_uniform}, implies that $\CL$ is also an introverted subspace of $\LG$. The product $\odot$ on  $\CL^*=\MG$ is w*-separately continuous.  Since $\ast=\odot$ on  $\LL$ and  $\LL$  is w*-dense in $\MG$ we conclude that $\ast=\odot$ on the whole of $\MG$.
\end{proof}
\vspace{.5cm}

 Let $X$ be an invariant subspace of $A^* $, then clearly $X$ is a Banach  $A$-module. We denote by $\mathcal{H}_r(X)$ and  $\mathcal{H}_\ell(X)$ the space of all left and right $A$-module homomorphisms of $X$; that is
\begin{eqnarray*}
&&\mathcal{H}_r(X)=\{~T\in\mathcal{B}(X) : T(x\cdot\omega)=T(x)\cdot\omega \ \ {\rm for \ all}\  x\in X , \omega\in A\},\ {\rm and}\\
&&\mathcal{H}_\ell(X)=\{~T\in\mathcal{B}(X) : T(\omega\cdot x)=\omega\cdot T(x) \ \ {\rm for \ all} \ x\in X , \omega\in A\}.
\end{eqnarray*}
Here ${\mathcal B}(X)$ denote the Banach space of all bounded operators on $X$. By $\mathcal{H}_r^\sigma(X)$ and $\mathcal{H}_\ell^\sigma(X)$ we mean  all normal elements of
$\mathcal{H}_r(X)$ and  $\mathcal{H}_\ell(X)$, respectively.\\

We  now give a characterization of $\mathcal{H}_r(X)$ and  $\mathcal{H}_\ell(X)$ in terms of the dual of  certain subspaces of $A^*\cdot A$ and $A\cdot A^*$, respectively.

\begin{prop}\label{XA=HL}
Let $A$ be a Banach algebra with an approximate identity bounded by one, then
\bit
\item[(i)] If $X$ is a left introverted subspace of $A^*$ then  $(X\cdot A)^*\cong \mathcal{H}_r(X)$. More precisely, the mapping $m\mapsto L_m :(X\cdot A)^*\to \mathcal{H}_r(X)$ is an isometric (algebraic) isomorphism,
where $(L_m(x))(\omega)=m(x\cdot\omega)$ for $x\in X, \omega\in A$.
\item[(ii)] If $X$ is a right introverted subspace of $A^*$ then $(A\cdot X)^*\cong \mathcal{H}_\ell(X)$.
\eit
\end{prop}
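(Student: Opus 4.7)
My plan is to exhibit $m\mapsto L_m$ as an isometric algebra isomorphism by verifying four things in order: $L_m$ takes values in $X$ and is a right $A$-module map; the assignment is contractive; it is isometric and surjective; and it respects products. The essential analytical input is that the BAI $\{e_\alpha\}\subseteq A$ bounded by one gives $y\cdot e_\alpha\to y$ in norm for every $y\in X\cdot A$: factoring $y=x\cdot\omega$, one has $y\cdot e_\alpha = x\cdot(\omega e_\alpha)\to x\cdot\omega = y$ since $\omega e_\alpha\to\omega$ in $A$.

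For well-definedness, given $m\in (X\cdot A)^*$ I would extend $m$ to some $\tilde m\in X^*$ by Hahn--Banach. Left introversion of $X$ then gives $\tilde m\odot x\in X$, while for $\omega\in A$ the identity
\[
(\tilde m\odot x)(\omega)=\tilde m(x\cdot\omega)=m(x\cdot\omega)=L_m(x)(\omega)
\]
(the second equality using $x\cdot\omega\in X\cdot A$) shows $L_m(x)=\tilde m\odot x\in X$, independently of the choice of extension. The associativity identity $(x\cdot\omega_0)\cdot\omega=x\cdot(\omega_0\omega)$ then makes $L_m$ a right $A$-module homomorphism, and $\|L_m\|\le\|m\|$ is immediate from $|L_m(x)(\omega)|\le\|m\|\|x\|\|\omega\|$.

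For the reverse inequality, I would fix $y\in X\cdot A$ with $\|y\|\le 1$ and compute
\[
|m(y)|=\lim_\alpha|m(y\cdot e_\alpha)|=\lim_\alpha|L_m(y)(e_\alpha)|\le\|L_m\|,
\]
giving $\|m\|\le\|L_m\|$. For surjectivity, given $T\in\mathcal{H}_r(X)$ I set $m(y):=\lim_\alpha T(y)(e_\alpha)$ on $X\cdot A$; writing $y=x\cdot\omega$ and using $T(y)=T(x)\cdot\omega$ shows $T(y)(e_\alpha)=T(x)(\omega e_\alpha)\to T(x)(\omega)$, so the limit exists and defines a bounded functional on $X\cdot A$. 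By construction $L_m(x)(\omega)=m(x\cdot\omega)=T(x)(\omega)$, i.e.\ $L_m=T$.

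For multiplicativity, I would first note that $X\cdot A$ is itself left introverted in $A^*$: for $m\in(X\cdot A)^*$ and $y=x\cdot\omega_0\in X\cdot A$, the functional $m\odot y$ coincides with $L_m(x)\cdot\omega_0\in X\cdot A$. This equips $(X\cdot A)^*$ with its own Arens-type product $\odot$, and a direct computation using $L_n(x\cdot\omega)=L_n(x)\cdot\omega$ yields $L_{m\odot n}=L_m\circ L_n$. Part (ii) follows by the verbatim symmetric argument with left and right interchanged. The most delicate point will be the first step---ensuring $L_m(x)\in X$ rather than merely $L_m(x)\in A^*$---which is precisely where left introversion and the Hahn--Banach extension enter decisively.
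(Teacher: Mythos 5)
Your proposal is correct and follows essentially the same route as the paper's proof: the same map $m\mapsto L_m$, the same use of the norm-one BAI to get $\|m\|\le\|L_m\|$, and the same recovery of $m$ from $T\in\mathcal{H}_r(X)$ by evaluating against the BAI (your genuine limit $\lim_\alpha T(y)(e_\alpha)$ is a slightly cleaner variant of the paper's $w^*$-cluster point of $\{T^*(\omega_\alpha)\}$). The Hahn--Banach extension argument showing $L_m(x)=\tilde m\odot x\in X$ correctly fills in the step the paper dismisses as ``a direct verification.''
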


\begin{proof} (i) A direct verification shows that $X\cdot A$ is  a left introverted subspace of $A^*$,
$L_m\in \mathcal{H}_r(X)$ and
the mapping $m\mapsto L_m$ is a contractive  homomorphism.
Let $\{\omega_\alpha\}$ be a BAI  in $A$ with  $\no{\omega_\alpha}\leq 1$, then  $\no{z\cdot \omega_\alpha-z}\to 0$, for each $z\in (X\cdot A),$ and so
$$\no{L_m(z)}\geq |L_m(z)(\omega_\alpha)|=|m(z\cdot \omega_\alpha)|\to |m(z)|,$$
from  which we get $\no{L_m}\geq \no{m}$. Let $T\in \mathcal{H}_r(X)$, then $T(X\cdot A)\subseteq (X\cdot A)$. Embed $A$ into $(X\cdot A)^*$ in the natural way and let $m$ be a w*-cluster point of the net $\{T^*(\omega_\alpha)\}$ in $(X\cdot A)^*$. Then for every $x\in X$ and $\omega\in A$ we have
\begin{eqnarray*}
T(x)(\omega)=\lim_\alpha T(x)(\omega*\omega_\alpha)=\lim_\alpha (T(x)\cdot\omega)(\omega_\alpha)
=\lim_\alpha T(x\cdot\omega)(\omega_\alpha)=m(x\cdot\omega)
=L_{m}(x)(\omega);
\end{eqnarray*}
implying that, $m\mapsto L_m$ is surjective. A similar argument may apply to prove (ii).
\end{proof}

As a consequence of Proposition \ref{XA=HL} we have the next result,  part (iii) of which has already proved in \cite[Proposition 4.1]{Neu}.

\begin{cor}\label{A*A}
Let $A$ be Banach algebra with an approximate identity bounded by one, then
\bit
\item[(i)] If $X$ is a left introverted subspace of  $A^*\cdot A$ then $\mathcal{H}_r(X)\cong X^*$;
\item[(ii)] If $X$ is a right introverted subspace of $A\cdot A^*$ then $\mathcal{H}_\ell(X)\cong X^*$;
\item[(iii)] $\mathcal{H}_r(A^*)\cong (A^*\cdot A)^*\cong\mathcal{H}_r(A^*\cdot A)$ and
$\mathcal{H}_\ell(A^*)\cong (A\cdot A^*)^*\cong\mathcal{H}_\ell(A\cdot A^*).$
\eit
\end{cor}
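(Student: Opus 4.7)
The plan is to derive all three parts directly from Proposition \ref{XA=HL}. The key observation is that under the hypothesis $X\subseteq A^*\cdot A$ in (i), a factorization argument lets us replace $X\cdot A$ by $X$ itself, after which Proposition \ref{XA=HL}(i) immediately yields $X^*\cong(X\cdot A)^*\cong\mathcal{H}_r(X)$.

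For part (i), I would first establish the equality $X\cdot A=X$. Given $x\in X\subseteq A^*\cdot A$, Cohen's Factorization Theorem supplies $y\in A^*$ and $\omega\in A$ with $x=y\cdot\omega$, and if $\{\omega_\alpha\}$ is a BAI with $\no{\omega_\alpha}\leq 1$ then
$$x\cdot\omega_\alpha=y\cdot(\omega\omega_\alpha)\longrightarrow y\cdot\omega=x$$
in norm, so $X=\overline{X\cdot A}$. The module form of Cohen's Factorization Theorem, applied to the essential left Banach $A$-module $X$, then upgrades this to the set-theoretic equality $X\cdot A=X$. Invoking Proposition \ref{XA=HL}(i) finishes (i). Part (ii) is handled by an entirely symmetric argument (write $x=\omega\cdot y$, use $\omega_\alpha\cdot x\to x$, and apply Proposition \ref{XA=HL}(ii)).

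For part (iii), I would specialize the earlier results. The paper recalls that $A^*\cdot A$ is a left introverted subspace of $A^*$, and trivially $A^*\cdot A\subseteq A^*\cdot A$, so (i) applied with $X=A^*\cdot A$ gives $\mathcal{H}_r(A^*\cdot A)\cong(A^*\cdot A)^*$. On the other hand, Proposition \ref{XA=HL}(i) with $X=A^*$ gives $\mathcal{H}_r(A^*)\cong(A^*\cdot A)^*$. Chaining these two isomorphisms yields the first chain of identifications in (iii); the second one is obtained in the same way from part (ii) and Proposition \ref{XA=HL}(ii) using $A\cdot A^*$.

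The only nontrivial step is the verification that $X\cdot A=X$, which is really just a citation of Cohen's Factorization Theorem in its module form; once this is in place, everything else is a direct application of Proposition \ref{XA=HL}. I do not expect a genuine obstacle here, only the need to be careful that the hypothesis $X\subseteq A^*\cdot A$ (rather than merely $X\subseteq A^*$) is what enables the essential module structure required for the factorization to apply within $X$.
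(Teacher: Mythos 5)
Your proposal is correct and follows essentially the same route as the paper: reduce everything to Proposition \ref{XA=HL} by showing $X\cdot A=X$ (using $X\subseteq A^*\cdot A$ to get $x\cdot\omega_\alpha\to x$ in norm, plus the module form of Cohen's theorem to identify $X\cdot A$ with its closure), and obtain (iii) by combining (i)--(ii) with Proposition \ref{XA=HL} applied to $X=A^*$. No gaps.
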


\begin{proof}
(i) It suffices  to show that $X\cdot A=X$. It follows from $X\subseteq A^*\cdot A$, that $\no{x\cdot \omega_\alpha-x}\to 0$, foe all $x\in X$, where $\{\omega_\alpha\}$ is a BAI of $A$. This and the fact that  $X\cdot A$ is closed imply that $X\cdot A=X$. Part (ii) can be proved in a similar fashion. For (iii) it is enough to apply parts (i) and (ii), and also apply  Proposition \ref{XA=HL} for $X=A^*.$
\end{proof}

A locally compact quantum group $\G$ is called co-amenable if $\LL$ has a BAI, \cite{B-T}. In \cite[Theorem 2]{HNR}, it is shown that $\G$ is co-amenable if and only if $\LL$ has a BAI consisting of normal states of $\LG.$ Therefore,  in the two classical cases $\G_a$ and $\G_s$;  $\G_a$ is always co-amenable and  $\G_s$ is co-amenable if and only if  the group $G$ is amenable.

In the next result which  simultaneously extend \cite[Theorem 3]{L_oper} and \cite[Corollaries 6.4, 6.5]{L_uni}, we  examine the conclusion of Corollary~\ref{A*A} for the case that $A=\LL$ and $X=\CL$.

\begin{thm}\label{LG}
If $\G$ is a co-amenable locally compact quantum group, then
\bit
\item[(i)] $\mathcal{H}_r(\CL)\cong \MG\cong\mathcal{H}_\ell(\CL)$;
\item[(ii)] $\mathcal{H}_r(\LG)\cong \LUC^*\cong \mathcal{H}_r(\LUC)$ and
$\mathcal{H}_\ell(\LG)\cong {\rm RUC}(\G)^*\cong \mathcal{H}_\ell({\rm RUC}(\G)).$
\eit
\end{thm}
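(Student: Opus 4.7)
The unifying strategy is to reduce both statements to Corollary~\ref{A*A} applied to $A=\LL$. Co-amenability of $\G$ is precisely what makes this machinery available: by \cite[Theorem~2]{HNR} it endows $\LL$ with a BAI of normal states (hence bounded by one), meeting the standing hypothesis of Corollary~\ref{A*A}.

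For part (ii), I would apply Corollary~\ref{A*A}(iii) directly with $A=\LL$. Since $\LL$ has a BAI, the Cohen factorization remark preceding Proposition~\ref{MG} gives $\LG\cdot\LL=\langle\LG\cdot\LL\rangle=\LUC$ and, dually, $\LL\cdot\LG=\RUC$, so Corollary~\ref{A*A}(iii) reads
\[
\mathcal{H}_r(\LG)\cong\LUC^*\cong\mathcal{H}_r(\LUC) \quad \text{and} \quad \mathcal{H}_\ell(\LG)\cong\RUC^*\cong\mathcal{H}_\ell(\RUC).
\]

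For part (i), the plan is to apply Corollary~\ref{A*A}(i) and (ii) with $X=\CL$; this would yield $\mathcal{H}_r(\CL)\cong\CL^*=\MG\cong\mathcal{H}_\ell(\CL)$ in one stroke. Two hypotheses need to be verified: (a) $\CL$ is a left and right introverted subspace of $\LG$; and (b) $\CL\subseteq\LUC$ and $\CL\subseteq\RUC$. Item (a) was already recorded inside the proof of Proposition~\ref{MG}, using the inclusion $\CL\subseteq\WAP$ from \cite[Theorem~4.3]{R_uniform}. Item (b) is the main obstacle. The idea is to exploit the non-degeneracy of the $\LL$-module structure on $\CL$---a structural property of the reduced $C^*$-algebraic quantum group \cite{K-V.von}---so that $\LL\cdot\CL$ and $\CL\cdot\LL$ are norm dense in $\CL$. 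Cohen's theorem applied to $\CL$ as a right Banach $\LL$-module (using its right BAI) then upgrades density to equality, yielding $\CL=\CL\cdot\LL\subseteq\LG\cdot\LL=\LUC$ and, symmetrically, $\CL\subseteq\RUC$.

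Thus the delicate step is (b): producing a factorization $x=y\cdot\omega$ with $y\in\CL$ and $\omega\in\LL$ for every $x\in\CL$. Once this standard structural fact is in hand, the theorem follows from Corollary~\ref{A*A} and Proposition~\ref{MG} without further work.
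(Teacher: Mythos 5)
Your proposal is correct and follows essentially the same route as the paper, which states this theorem without a separate proof precisely because it is obtained by ``examining the conclusion of Corollary~\ref{A*A} for the case $A=\LL$ and $X=\CL$'' (part (ii) being Corollary~\ref{A*A}(iii) verbatim). The only divergence is cosmetic: for the key inclusion $\CL\subseteq\LUC$ (and $\CL\subseteq\RUC$) the paper simply invokes \cite[Theorem 2.3]{R_uniform}, whereas you re-derive it from non-degeneracy of the module action plus Cohen factorization --- the same factorization the paper itself uses later in the proof of Lemma~\ref{X=MG}(ii).
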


The next result investigate the normal elements of $\mathcal{H}_r(X)$. Note that, for every left (or right) introverted subspace $X$ of $A^*$ one may embed $A$ into $X^*$ in a natural way.

\begin{prop}\label{HL=HLS}
Let $A$ be a Banach algebra with a BAI, and let  $X$ be a left introverted subspace of $A^*$. Consider the following statement:
\bit
\item[(i)] $A\odot X^*\subseteq A$;
\item[(ii)] $\mathcal{H}_r(X)=\mathcal{H}_r^\sigma(X)$;
\item[(iii)] Every element of $A\odot X^*$ is normal on $X$.
\eit
Then (i)$\Rightarrow$(ii)$\Rightarrow$(iii) and these are equivalent if $A^*\cdot A\subseteq X.$
\end{prop}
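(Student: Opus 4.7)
The plan is to verify the chain (i)$\Rightarrow$(ii)$\Rightarrow$(iii) directly and then close the loop (iii)$\Rightarrow$(i) using the hypothesis $A^*\cdot A\subseteq X$; I expect the last step to be the main obstacle.

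For (i)$\Rightarrow$(ii), take $T\in\mathcal{H}_r(X)$ and, using Proposition~\ref{XA=HL}(i), write $T=L_m$ for some $m\in(X\cdot A)^*$. A Hahn--Banach extension produces $\tilde m\in X^*$ agreeing with $m$ on $X\cdot A$, so that for every $x\in X$ and $\omega\in A$,
\[
T(x)(\omega)=m(x\cdot\omega)=(\omega\odot\tilde m)(x).
\]
By (i), $\omega\odot\tilde m\in A$, so $x\mapsto T(x)(\omega)$ is weak*-continuous for every $\omega$. Hence $T$ is weak*-weak*-continuous, which is (ii).

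For (ii)$\Rightarrow$(iii), observe that for every $n\in X^*$ the left-multiplication operator $T_n\colon X\to X$, $x\mapsto n\odot x$, belongs to $\mathcal{H}_r(X)$: a short check using associativity of $\odot$ and the identity $(n\odot x)\cdot\omega=n\odot(x\cdot\omega)$ confirms this. Applied to each $n\in A\odot X^*$, hypothesis (ii) then gives that $T_n$ is weak*-weak*-continuous, i.e., $n$ is normal on $X$.

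The main obstacle is (iii)$\Rightarrow$(i) under $A^*\cdot A\subseteq X$. Given $\omega\in A$ and $m\in X^*$, use Cohen factorization to write $\omega=\omega_1\omega_2$ with $\omega_1,\omega_2\in A$, and set $n=\omega_2\odot m\in A\odot X^*$. A direct computation gives, for $x\in X$,
\[
(\omega\odot m)(x)=m\bigl((x\cdot\omega_1)\cdot\omega_2\bigr)=n(x\cdot\omega_1)=T_n(x)(\omega_1).
\]
By (iii), $T_n$ is weak*-weak*-continuous on $X$, and evaluation at $\omega_1\in A$ is weak*-continuous on $X$; composing shows $x\mapsto(\omega\odot m)(x)$ is weak*-continuous. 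Thus $\omega\odot m$ is a weak*-continuous functional on $X$, and the hypothesis $A^*\cdot A\subseteq X$ is then used to upgrade this to $\omega\odot m\in A$: indeed, it forces $X$ to separate $A$, because any $\nu\in A$ annihilating $X$ satisfies $y(\omega\nu)=(y\cdot\omega)(\nu)=0$ for all $y\in A^*$ and $\omega\in A$, so $\omega\nu=0$ for every $\omega\in A$, and a BAI argument yields $\nu=0$. Consequently the embedding $A\hookrightarrow X^*$ is injective, identifying the weak*-continuous functional $\omega\odot m$ with a unique element of $A$ and completing (i).
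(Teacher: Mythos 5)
Your overall architecture is sound, and (i)$\Rightarrow$(ii) goes through essentially as in the paper (your $L_m$ plus Hahn--Banach extension is just a repackaging of the paper's computation $T^*(\omega\nu)=\omega\odot T^*(\nu)$). But your step (ii)$\Rightarrow$(iii) contains a genuine non sequitur: you assert that (ii) ``gives that $T_n$ is weak*-weak*-continuous, i.e., $n$ is normal on $X$.'' These are not the same statement. Normality of the operator $T_n\colon x\mapsto n\odot x$ means that $x\mapsto (n\odot x)(\nu)=n(x\cdot\nu)$ is weak*-continuous for every $\nu\in A$, whereas normality of the functional $n$ means that $x\mapsto n(x)$ itself is weak*-continuous; the former does not yield the latter, since one cannot pass from $n(x\cdot\omega_\alpha)$ to $n(x)$ along a BAI ($x\cdot\omega_\alpha\to x$ only weak* for a general $x\in X$, and weak*-continuity of $n$ is exactly what is in question). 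The repair is to apply (ii) to the \emph{outer} factor: writing $n=\omega\odot m$ with $\omega\in A$ and $m\in X^*$, one has $n(x)=m(x\cdot\omega)=(m\odot x)(\omega)=T_m(x)(\omega)$, and $T_m\in\mathcal{H}_r(X)$ is normal by (ii), so $n$ is the composition of a normal operator with the weak*-continuous evaluation at $\omega$. This is precisely the paper's argument, phrased there via the operator $L_{\pi(m)}$, which coincides with $T_m$.

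Your (iii)$\Rightarrow$(i) takes a genuinely different route from the paper's: you show that $\omega\odot m$ is $\sigma(X,A)$-continuous on $X$ and then try to place it in the image of $A$, whereas the paper extends $m$ to $\tilde m\in A^{**}$ and shows that $\omega\nu\odot\tilde m$ is weak*-continuous on all of $A^*$, hence lies in $A$; there the hypothesis $A^*\cdot A\subseteq X$ is used to guarantee $x_\alpha\cdot\omega\in X$ for an arbitrary weak*-convergent net in $A^*$, so that (iii) can be invoked. Two comments on your version. First, in this direction the claim ``by (iii), $T_n$ is weak*-weak*-continuous'' is actually correct (normality of the functional $n$ does imply normality of $T_n$), so the displayed computation is fine. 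Second, your closing step misattributes the role of the hypothesis: injectivity of $A\hookrightarrow X^*$ only gives \emph{uniqueness} of a representing element, not \emph{existence}. What you actually need is the dual-pair fact that every $\sigma(X,A)$-continuous functional on $X$ is implemented by an element of $A$; once you invoke that theorem explicitly the step closes, but your separation argument as written does not supply it.
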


\begin{proof}
 (i)$\Rightarrow$(ii): Let $T\in\mathcal{H}_r(X)$. As $A^2=A$, it is enough to show that
$T^*(A^2)\subseteq A$. To see this, let $\omega, \nu\in A$ and $x\in X,$
then
\begin{eqnarray*}
T^*(\omega\nu)(x)=(T(x)\cdot\omega)(\nu)
=T(x\cdot\omega)(\nu)
=(\omega\odot T^*(\nu))(x),
\end{eqnarray*}
where, $T^*(\omega\nu)=\omega\odot T^*(\nu)\in A\odot X^*\subseteq A$, as required.\\
(ii)$\Rightarrow$(iii): Take $m\in X^*, \omega\in A$ and let $\pi:X^*\to (X\cdot A)^*$ be  the adjoint of the natural embedding $(X\cdot A)\hookrightarrow X$. Then by Proposition \ref{XA=HL}, $L_{\pi(m)}\in\mathcal{H}_r(X)$ and so it is normal. Let $\{x_\alpha\}\subseteq X$ w*-converges to $x\in X$, then
\begin{eqnarray*}
(\omega\odot m)(x_\alpha)=m(x_\alpha\cdot\omega)
=L_{\pi(m)}(x_\alpha)(\omega)
\to L_{\pi(m)}(x)(\omega)=(\omega\odot m)(x).
\end{eqnarray*}
Hence $\omega\odot m$ is normal on $X$.\\
 Now we prove (iii)$\Rightarrow$(i) under the additional assumption  $A^*\cdot A\subseteq X$. Let $m\in X^*,$ $\omega, \nu\in A$ and let $\tilde{m}\in A^{**}$ be an extension of $m$. Then for every net  $\{x_\alpha\}\subseteq A^*$ which w*-converges  to $x\in A^*$, we have $$(\omega\nu\odot\tilde{m})(x_\alpha)=(\nu\odot\tilde{m})(x_\alpha\cdot\omega)=(\nu\odot m)(x_\alpha\cdot\omega)\to(\nu\odot m)(x\cdot\omega)=(\omega\nu\odot\tilde{m})(x).$$
Hence $\omega\nu\odot \tilde{m}\in A.$ This together with the fact that $A^2=A$ imply that $\omega\odot m\in A$.
\end{proof}

For a locally compact quantum group $\G$, it is known that $\LL$ is an ideal of $\MG$; see \cite[pp. 193-194]{K-V}. In particular, $\LL\odot\CL^*=\LL\ast \MG\subseteq\LL$ by Proposition \ref{MG}. Using Proposition \ref{HL=HLS} we obtain $\mathcal{H}_r(\CL)=\mathcal{H}_r^\sigma(\CL)$. Combine this fact with the part (i) of Theorem~\ref{LG} we get the next corollary.

\begin{cor}
If $\G$ is a co-amenable locally compact quantum group, then
\[\mathcal{H}_r^\sigma(\CL)=\mathcal{H}_r(\CL)\cong {\rm M}(\G)\cong\mathcal{H}_\ell(\CL)=\mathcal{H}_\ell^\sigma(\CL).\]
\end{cor}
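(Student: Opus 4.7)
The plan is to separate the claim into the parts that are already available and the parts that actually need argument. The two central isomorphisms $\mathcal{H}_r(\CL)\cong\MG\cong\mathcal{H}_\ell(\CL)$ are given verbatim by Theorem~\ref{LG}(i), so only the equalities $\mathcal{H}_r(\CL)=\mathcal{H}_r^\sigma(\CL)$ and $\mathcal{H}_\ell(\CL)=\mathcal{H}_\ell^\sigma(\CL)$ require work. Both will come from Proposition~\ref{HL=HLS} (and its mirror image) applied with $A=\LL$ and $X=\CL$.

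First I would handle the right-module version by invoking the implication (i)$\Rightarrow$(ii) of Proposition~\ref{HL=HLS}. Its hypotheses read off at once: $\LL$ has a BAI because $\G$ is co-amenable; $\CL$ is left introverted in $\LG$, as established inside the proof of Proposition~\ref{MG}; and condition (i), $\LL\odot\CL^*\subseteq\LL$, is exactly the observation already recorded in the paragraph preceding the corollary, since the ideal property $\LL\ast\MG\subseteq\LL$ combined with the identification $\ast=\odot$ on $\MG$ from Proposition~\ref{MG} delivers it directly. This gives $\mathcal{H}_r(\CL)=\mathcal{H}_r^\sigma(\CL)$.

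For the left-module version I would invoke the symmetric (right introverted) counterpart of Proposition~\ref{HL=HLS}; its proof is strictly parallel to the one in the text. The only condition to check is $\CL^*\odot\LL\subseteq\LL$, which follows by the same two-step argument from the other ideal inclusion $\MG\ast\LL\subseteq\LL$ and Proposition~\ref{MG}. Combining with the isomorphism $\mathcal{H}_\ell(\CL)\cong\MG$ of Theorem~\ref{LG}(i) closes the chain.

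The main obstacle is really just the bookkeeping needed to make sure the right-sided apparatus is legitimately in place: one must confirm that $\CL$ is simultaneously \emph{right} introverted (which follows from the symmetric half of the argument in the proof of Proposition~\ref{MG}, using $\CL\subseteq\WAP$ and the separate w*-continuity of $\odot$), and that the right-sided analogue of Proposition~\ref{HL=HLS} admits the same proof with left and right module actions swapped. Once these mirror facts are acknowledged, the corollary is immediate.
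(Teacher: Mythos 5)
Your argument is exactly the paper's: the isomorphisms come from Theorem~\ref{LG}(i), and the equalities $\mathcal{H}_r(\CL)=\mathcal{H}_r^\sigma(\CL)$, $\mathcal{H}_\ell(\CL)=\mathcal{H}_\ell^\sigma(\CL)$ come from Proposition~\ref{HL=HLS} (and its right-sided mirror) with $A=\LL$, $X=\CL$, using that $\LL$ is an ideal of $\MG$ together with $\ast=\odot$ from Proposition~\ref{MG}. If anything, you are more explicit than the text, which only spells out the right-module half and leaves the right-introversion of $\CL$ and the mirrored proposition implicit.
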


 Using Proposition \ref{HL=HLS} for $X=A^*$ we get the next result.

\begin{cor}\label{id}
Let $A$ be  a Banach algebra   with a BAI, then
\bit
\item[(i)]  $\mathcal{H}_r(A^*)=\mathcal{H}_r^\sigma(A^*)$ if and only if $A$ is a right ideal of $A^{**}$;
\item[(ii)]  $\mathcal{H}_\ell(A^*)=\mathcal{H}_\ell^\sigma(A^*)$ if and only if $A$ is a left ideal of $A^{**}$.
\eit
\end{cor}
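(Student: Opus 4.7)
The plan is to obtain the corollary as a direct specialization of Proposition~\ref{HL=HLS} (and its symmetric counterpart for right introverted subspaces) to the case $X=A^*$. Crucially, the extra hypothesis $A^*\cdot A\subseteq X$ (respectively $A\cdot A^*\subseteq X$) in that proposition is trivially satisfied when $X=A^*$, so all three conditions of the proposition become equivalent in this setting.

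For part (i), I set $X=A^*$ in Proposition~\ref{HL=HLS}. Condition (ii) there is literally the statement $\mathcal{H}_r(A^*)=\mathcal{H}_r^\sigma(A^*)$. Condition (i) reads $A\odot X^*\subseteq A$, i.e.\ $A\odot A^{**}\subseteq A$. Since the product $\odot$ on $A^{**}=(A^*)^*$ induced by $X=A^*$ is the first Arens product (as recalled in the preliminaries of Section~\ref{mh}), this inclusion is exactly the assertion that $A$ is a right ideal of $A^{**}$. Chaining the two equivalences (i)$\Leftrightarrow$(ii) of Proposition~\ref{HL=HLS} yields the stated biconditional in (i).

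For part (ii), I invoke the mirror of Proposition~\ref{HL=HLS} for right introverted subspaces (its proof is identical up to swapping the left and right actions, so it may be taken for granted). Applied to the right introverted subspace $X=A^*$, the analogous condition (i) translates to $A^{**}\odot A\subseteq A$ where $\odot$ is now the second Arens product on $A^{**}$; that is, $A$ is a left ideal of $A^{**}$. The mirrored condition (ii) is $\mathcal{H}_\ell(A^*)=\mathcal{H}_\ell^\sigma(A^*)$, and the conclusion follows.

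I do not anticipate a genuine obstacle: the substantive work is already done in Proposition~\ref{HL=HLS}, so the corollary reduces to a bookkeeping exercise. The single point requiring a little care is keeping track of which Arens product arises on each side, so that the sidedness of the ideal condition is correctly paired with $\mathcal{H}_r$ versus $\mathcal{H}_\ell$; this matching is forced by the symmetry of the definitions of left/right introversion and the induced module actions.
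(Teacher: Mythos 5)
Your proposal is correct and follows exactly the paper's route: the paper's entire proof is the one-line observation that Corollary~\ref{id} is Proposition~\ref{HL=HLS} specialized to $X=A^*$ (where the hypothesis $A^*\cdot A\subseteq X$ holds trivially), together with its mirror for the left-handed case. Your extra care about distinguishing the two Arens products is harmless but not strictly needed, since the first and second Arens products agree whenever one factor lies in $A$, so the ideal conditions are unambiguous.
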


 Recall that a locally compact quantum group $\G$ is said to be compact if its Haar weights are finite, or equivalently, $\CL$ is unital. $\G$ is said to be discrete if its dual quantum group $\hat{\G}$ is compact, or equivalently, $\LL$ is unital; for more details see~\cite{Run}. For a locally compact quantum group $\G$, it was shown in \cite[Theorem 3.8]{Run} that $\LL$ is an ideal of $\LL^{**}$ if and only $\G$ is compact. Utilizing this fact and Corollary~\ref{id} for $A=\LL$, we obtain the next result, which is an extension of \cite[Theorem 2]{L_oper}. It seems that the next result for the classical case $\G_s$ has already been known in the context of Fourier algebras; however we don't know a reference for it.

\begin{thm}\label{Comp}
For every co-amenable locally compact quantum group $\G$, the following assertions are equivalent:
\bit
\item[(i)] $\mathcal{H}_r(\LG)=\mathcal{H}_r^\sigma(\LG)$;
\item[(ii)] $\mathcal{H}_\ell(\LG)=\mathcal{H}_\ell^\sigma(\LG)$;
\item[(iii)] $\G$ is compact.
\eit
\end{thm}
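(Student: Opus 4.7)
The plan is to apply Corollary~\ref{id} with $A=\LL$, which is legitimate since co-amenability of $\G$ provides a BAI in $\LL$. Parts (i) and (ii) of that corollary translate conditions (i) and (ii) of the theorem into the statements that $\LL$ is, respectively, a right or a left ideal of $\LL^{**}$. Since \cite[Theorem 3.8]{Run} identifies the two-sided ideal property of $\LL$ in $\LL^{**}$ with compactness of $\G$, the implication (iii)$\Rightarrow$(i)$\wedge$(ii) is immediate: compactness gives a two-sided ideal, hence both one-sided ideals, whence Corollary~\ref{id} delivers (i) and (ii).

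For the converses, one must upgrade a one-sided ideal property to a two-sided one. The natural instrument is the unitary antipode $R$ of $\G$, a ${}^*$-anti-automorphism of $\LG$ satisfying $(R\otimes R)\Gamma=\chi\,\Gamma R$ (with $\chi$ the tensor flip), whose pre-adjoint is an anti-isometric involution of $\LL$ that anti-intertwines convolution. Its biadjoint on $\LL^{**}$ exchanges the Arens product with its opposite, and therefore interchanges the left- and right-ideal properties of $\LL$ in $\LL^{**}$. Thus (i) and (ii) are \emph{a priori} equivalent, and either one yields the two-sided ideal property to which Runde's theorem then applies to conclude (iii).

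The principal obstacle is exactly this symmetry step: one has to verify that the pre-adjoint of $R$ lifts to $\LL^{**}$ in a manner that genuinely inverts the Arens multiplication, so that a right-ideal hypothesis is transported to a left-ideal hypothesis and vice versa. An alternative route, avoiding the antipode altogether, would be to revisit the proof of \cite[Theorem 3.8]{Run} and observe that a one-sided ideal hypothesis already suffices to force the Haar weight to be finite. Under either approach, once the one-sided/two-sided gap is bridged the remaining bookkeeping is routine.
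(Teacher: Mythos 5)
Your proposal follows essentially the same route as the paper: Corollary~\ref{id} applied to $A=\LL$ (legitimate by co-amenability), Runde's characterization of compactness via the ideal property of $\LL$ in $\LL^{**}$, and the unitary antipode to pass between the left- and right-ideal properties. The ``principal obstacle'' you flag is resolved in the paper by the direct computation $\kappa(x)\cdot\kappa^*(\omega)=\kappa(\omega\cdot x)$, a consequence of $(\kappa\otimes\kappa)\Gamma=\chi\Gamma\kappa$, which works at the level of the module action of $\LL$ on $\LG$ followed by a single dualization, so the more delicate question of how the biadjoint of $\kappa$ interacts with the full Arens product on $\LL^{**}$ never actually arises.
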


\begin{proof}
Based on what we have mentioned just before Theorem \ref{Comp}, it is enough to show that  $\LL$ is a left ideal in $\LL^{**}$ if and only if it is a right ideal in $\LL^{**}.$ To show this, let $\kappa$ be the unitary antipode of $\G$ and let $\LL$ be a right ideal in $\LL^{**}$. Then for each $x\in\LG$, and each $\omega\in\LL$ we have
\begin{eqnarray*}
\kappa(x)\cdot\kappa^*(\omega)=(\omega\circ\kappa\otimes\iota)\Gamma(\kappa(x))
=(\iota\otimes\omega\circ\kappa)(\kappa\otimes\kappa)\Gamma(x)
=\kappa((\iota\otimes\omega)\Gamma(x))
\end{eqnarray*}
and so $m\cdot \omega=\kappa^*\big{(}\kappa^*(\omega)\cdot \kappa^*(m)\big{)}\in\LL$ for all $m\in\LL^{**}$. Thus, $\LL$ is a left ideal of $\LL^{**}$, as claimed.
\end{proof}

A result of Runde \cite[Theorem 4.4]{Run} states that $\CL$ is an ideal of $\CL^{**}$ if and only if $\G$ is discrete. We use this fact in the next result.

\begin{thm}\label{Disc}
For every  locally compact quantum group $\G$, the following assertions are equivalent:
\bit
\item[(i)] $\mathcal{H}_r(\MG)=\mathcal{H}_r^\sigma(\MG)$;
\item[(ii)] $\mathcal{H}_\ell(\MG)=\mathcal{H}_\ell^\sigma(\MG)$;
\item[(iii)] $\G$ is discrete.
\eit
\end{thm}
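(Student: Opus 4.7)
The plan is to run the proof of Theorem \ref{Comp} with $\CL$ in place of $\LL$, then invoke Runde's characterization of discreteness (\cite[Theorem 4.4]{Run}) in place of his characterization of compactness. As a $C^*$-algebra, $\CL$ possesses an approximate identity of norm at most one, so Corollary~\ref{id} applies with $A=\CL$. Noting $A^{*}=\MG$, this identifies (i) with the statement that $\CL$ is a right ideal of $\CL^{**}$, and (ii) with the statement that $\CL$ is a left ideal of $\CL^{**}$ (both with respect to the first Arens product). Runde's theorem says $\CL$ is a two-sided ideal of $\CL^{**}$ if and only if $\G$ is discrete, so the implications (iii)$\Rightarrow$(i) and (iii)$\Rightarrow$(ii) are immediate.

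For the reverse implications, the task reduces to showing that $\CL$ being a one-sided ideal in $\CL^{**}$ forces it to be a two-sided ideal. This is achieved by transporting via the unitary antipode $\kappa$, exactly as in the proof of Theorem~\ref{Comp}. The unitary antipode restricts to a bounded $*$-anti-automorphism of $\CL$, so its second adjoint $\kappa^{**}:\CL^{**}\to\CL^{**}$ is a weak-$*$ continuous linear involution that preserves $\CL$ and reverses the Arens product. Consequently, if $\CL$ is a right ideal, then for $m\in\CL^{**}$ and $a\in\CL$,
$$m\odot a \;=\; \kappa^{**}\!\bigl(\kappa(a)\odot\kappa^{**}(m)\bigr)\;\in\;\kappa^{**}(\CL)\;=\;\CL,$$
so $\CL$ is also a left ideal; the converse is symmetric. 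Together with Runde's theorem this closes the loop (i)$\Leftrightarrow$(ii)$\Leftrightarrow$(iii).

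The main delicate point is the claim that $\kappa^{**}$ is anti-multiplicative for the Arens product on $\CL^{**}$. This follows from the weak-$*$ density of $\CL$ in $\CL^{**}$, the separate weak-$*$ continuity of the Arens product, and the anti-multiplicativity of $\kappa$ on $\CL$, and is entirely parallel to the calculation carried out in Theorem~\ref{Comp}; no new technical input beyond standard locally compact quantum group theory is required.
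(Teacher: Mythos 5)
Your proposal is correct and follows essentially the same route as the paper: apply Corollary~\ref{id} with $A=\CL$, invoke Runde's characterization of discreteness, and use the unitary antipode to upgrade a one-sided ideal of $\CL^{**}$ to a two-sided one (the paper carries out this last step by a direct element-wise computation with $\kappa_c^*$ rather than by citing anti-multiplicativity of $\kappa_c^{**}$). One small caution: the ``separate weak-$*$ continuity of the Arens product'' you invoke is valid here only because $\CL$ is a C$^*$-algebra, hence Arens regular with $\CL^{**}$ a von Neumann algebra --- for a general Banach algebra the first Arens product is weak-$*$ continuous only in the left variable --- but the identity you actually use has one factor in $\CL$, which lies in the topological centre, so the argument stands.
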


\begin{proof}
Let $\kappa_c$ be the unitary antipode of the reduced C$^*$-algebraic quantum group $(\CL,\Gamma_c,\varphi_c,\psi_c)$ and let $\CL$ be a right ideal in $\CL^{**}$. Then for every  $x\in\CL$ and $m\in\MG$ we have
\begin{eqnarray*}
\kappa_c^*(x\cdot m)(y)=(x\cdot m)(\kappa_c(y))=m(\kappa_c(y)x)=\kappa_c^*(m)(\kappa_c(x)y)
=(\kappa_c^*(m)\cdot \kappa_c(x))(y).
\end{eqnarray*}
Thus  $f\cdot x=\kappa_c\big{(}\kappa_c(x)\cdot \kappa_c^{**}(f)\big{)}\in\CL$, for every $f\in\CL^{**}$. In other words, $\CL$ is a left ideal of $\CL^{**}$. Now apply Corollary \ref{id} for $A=\CL$.
\end{proof}

\section{The  Multiplier algebra of $\LL$ }\label{multi}
For a Banach algebra $A$, let  $\mathcal{RM}(A)$ and $\mathcal{LM}(A)$ be the right and left
multiplier algebra of $A$, respectively; that is
\begin{eqnarray*}
&&\mathcal{LM}(A)=\{~T\in \mathcal{B}(A)~:~T(ab)=T(a)b\quad {\rm for all}~~~ a,b\in A~\},\ {\rm and }\\
&&\mathcal{RM}(A)=\{~T\in \mathcal{B}(A)~:~T(ab)=aT(b)\quad {\rm for all}~~~ a,b\in A~\}.
\end{eqnarray*}

A famous result of Wendel, \cite{Wend}, states that for every locally compact group $G$, the left (and right) multiplier algebras of ${\rm L}^1(G)$ can be identified with ${\rm M}(G)$; that is, $\mathcal{LM}({\rm L}^1(G))\cong {\rm M}(G)\cong \mathcal{RM}({\rm L}^1(G)).$ For the Fourier algebra $A(G)$, it has been  shown (see \cite{Derigh}, for instance) that, in the case where $G$ is amenable, then  $\mathcal{LM}({\rm A}(G))=\mathcal{RM}({\rm A}(G))\cong {\rm B}(G).$ Here we extend both of these by a general result related to locally compact quantum groups.

\begin{thm}\label{LM=RM}
For every co-amenable locally compact quantum group $\G$,
\[\mathcal{LM}(\LL)\cong \MG\cong \mathcal{RM}(\LL).\]
More precisely, the mapping $m\mapsto R_m : \MG\rightarrow  \mathcal{RM}(\LL)$ is an isometric isomorphism, where $R_m(\omega)=\omega \ast m$ for $\omega\in\LL$.
\end{thm}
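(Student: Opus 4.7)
The plan is to prove that $m \mapsto R_m$ is an isometric algebra isomorphism of $\MG$ onto $\mathcal{RM}(\LL)$, and to obtain $\mathcal{LM}(\LL) \cong \MG$ by a mirror argument using $L_m(\omega) = m \ast \omega$. The three essential inputs are: (a) $\LL$ is a two-sided ideal of $\MG$ (noted just above Corollary~\ref{id}); (b) co-amenability supplies a BAI $\{e_\alpha\}$ for $\LL$ bounded by $1$; and (c) Proposition~\ref{MG}, which identifies $\ast$ with $\odot$ on $\MG$ and so forces $\ast$ to be separately w*-continuous on $\MG = \CL^*$.

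Well-definedness and the easy bound $\|R_m\| \le \|m\|$ are immediate: $R_m(\omega) = \omega\ast m$ lies in $\LL$ by the ideal property, and associativity of $(\MG,\ast)$ gives $R_m(\omega\nu) = (\omega\nu)\ast m = \omega\ast(\nu\ast m) = \omega\, R_m(\nu)$, so $R_m \in \mathcal{RM}(\LL)$. Multiplicativity of $m \mapsto R_m$ follows directly from associativity. The reverse norm estimate $\|m\| \le \|R_m\|$ I would obtain by combining $\|R_m(e_\alpha)\| = \|e_\alpha \ast m\| \le \|R_m\|$ with $e_\alpha \ast m \to m$ w*-ly in $\MG$ (separate w*-continuity of $\ast$, using that $e_\alpha \to 1_{\MG}$ w*-ly since $\{e_\alpha\}$ is a BAI in the w*-dense subalgebra $\LL \subseteq \MG$) and the w*-lower semicontinuity of the norm.

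The crux is surjectivity. Given $T \in \mathcal{RM}(\LL)$, the net $\{T(e_\alpha)\}$ is bounded in $\LL \hookrightarrow \CL^* = \MG$, so by Banach--Alaoglu some subnet w*-converges to an element $m \in \MG$. Fix $\omega \in \LL$. The two-sided BAI property yields $\omega e_\alpha \to \omega$ in norm, hence
\[
T(\omega) \;=\; \lim_\alpha T(\omega e_\alpha) \;=\; \lim_\alpha \omega\, T(e_\alpha) \;=\; \lim_\alpha \omega \ast T(e_\alpha)
\]
in norm of $\LL$, and a fortiori w*-ly in $\MG$. On the other hand, Proposition~\ref{MG} guarantees that $\omega \ast T(e_\alpha) \to \omega \ast m$ w*-ly in $\MG$ along the chosen subnet. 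Comparing yields $T(\omega) = \omega \ast m = R_m(\omega)$, and $\omega\ast m$ lies in $\LL$ automatically by the ideal property. The $\mathcal{LM}(\LL)$ identification is parallel: apply $T \in \mathcal{LM}(\LL)$ to $e_\alpha \omega$, extract a w*-cluster point of $\{T(e_\alpha)\}$, and exploit separate w*-continuity in the other slot.

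The main obstacle I foresee is making the w*-limit manipulation clean: one must pass from a w*-cluster point $m$ in $\MG$ — produced only from the behaviour of $T$ on the BAI — to an identity $T(\omega) = \omega \ast m$ valid on \emph{all} of $\LL$. This is exactly where Proposition~\ref{MG} is indispensable; without separate w*-continuity of $\ast$ on $\MG$, the cluster point would only recover $T$ on a w*-dense subset. The ideal property of $\LL$ in $\MG$ is then what keeps the recovered action $\omega \mapsto \omega \ast m$ inside $\LL$.
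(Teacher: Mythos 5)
Your proposal is correct and follows essentially the same route as the paper: verify the easy inequality $\|R_m\|\le\|m\|$ and multiplicativity directly, then recover any $T\in\mathcal{RM}(\LL)$ as $R_m$ where $m$ is a w*-cluster point of $\{T(e_\alpha)\}$ in $\MG$, using the separate w*-continuity of $\ast$ from Proposition~\ref{MG} to pass to the limit in $\omega\ast T(e_\alpha)$. The only cosmetic differences are that you isolate the reverse norm estimate via w*-lower semicontinuity (the paper folds it into the surjectivity step by noting $\|m\|\le\|T\|$ for the cluster point) and that you make explicit the ideal property $\LL\ast\MG\subseteq\LL$, which the paper leaves to its ``direct verification.''
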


\begin{proof}
A direct verification shows that, for every  $m, n\in \MG$
$$R_m\in \mathcal{RM}(\LL)\ ,\ \  \|R_m\|\leq\|m\|\ \  {\rm and }\ \  R_{m\ast n}=R_m R_n.$$
Suppose that $T\in \mathcal{RM}(\LL)$. It is enough to show that
there exists an element $m\in \MG$ satisfying $\no{m}\leq \no{T}$ and $T=R_m$. Let $\{\omega_\alpha\}$
be an approximate identity bounded by one for $\LL$.
Then $\{T(\omega_\alpha)\}$ is a net in $\LL\subseteq \MG$ bounded by  $\no{T}$.
By passing to a subnet, if necessary,  we may assume that $T(\omega_\alpha)$ w*-converges to some  $m\in \MG$ with $\no{m}\leq\no{T}.$ By Proposition \ref{MG} the product $*$ on  $\MG$ is w*-separately continuous. So for $\omega\in\LL$ and $x\in \CL$ we have
\begin{eqnarray*}
T(\omega)(x)= \lim_\alpha T(\omega*\omega_\alpha)(x)=\lim_\alpha\ (\omega*T(\omega_\alpha))(x)
=(\omega\ast m)(x)=R_m(\omega)(x).
\end{eqnarray*}
Since $\CL$ is w*-dense in $\LG$, we have $T=R_m$ with $\no{m}\leq\no{T}$.
\end{proof}

Let $(\M,\Gamma)$ be a Hopf-von Neumann algebra and $\phi\in\mathcal{B}(\M)$. We say that $\phi$ commutes with left (resp. right)
translations if~~
$\Gamma\phi=(\iota\otimes\phi)\Gamma~~({\rm resp}.~\Gamma\phi=(\phi\otimes\iota)\Gamma)$.
We denote by $\MBL{\M}~({\rm resp}.~~\MBR{\M})$ the space of all  bounded
normal linear mappings on $\M$ commuting with the left (resp. right) translations. For the classical  case  $({\rm L}^\infty(G),\Gamma_a)$ one may easily verify that a bounded normal linear mapping $\phi$ on ${\rm L}^\infty(G)$ lies in $\MBL{{\rm L}^\infty(G)}~({\rm resp}.~~\MBR{{\rm L}^\infty(G)})$ if and only if
$\phi(l_sf)=l_s\phi(f)~~({\rm resp}.~\phi(r_sf)=r_s\phi(f))$ for each $s\in G$ and $f\in {\rm L}^\infty(G)$, where $l_sf(t)=f(st)=r_tf(s).$\\

It is well-known that $\mathcal{RM}(A)\cong\mathcal{H}_r^\sigma(A^*).$ Indeed, a direct verification shows that
$T\mapsto T^*: \mathcal{RM}(A)\rightarrow\mathcal{H}_r^\sigma(A^*)$ is an isometric algebra isomorphism. Similarly, $\mathcal{LM}(A)\cong\mathcal{H}_\ell^\sigma(A^*).$ Applying these facts for $A=\LL$, as a combination of  Theorem~\ref{LG} and Theorem~\ref{LM=RM}, we have the next result, that covers a series of  results presented in  \cite{B-E} and \cite{L_uni}; see \cite[Theorem 6.6]{L_uni}, for instance.

\begin{cor}\label{MBR}
For every co-amenable locally compact quantum group $\G$,
\[\MBR{\LG}=\mathcal{H}_\ell^\sigma(\LG)\cong \mathcal{LM}(\LL)\cong \MG\cong \mathcal{RM}(\LL)\cong\mathcal{H}_r^\sigma(\LG)=\MBL{\LG}.\]
\end{cor}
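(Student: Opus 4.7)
The plan is to chain together several identifications, all of which either follow from unwinding definitions using the slice-map structure of $\Gamma$ or invoke results already at hand. The corollary consists of three pairs of assertions: the outer equalities $\MBR{\LG}=\mathcal{H}_\ell^\sigma(\LG)$ and $\mathcal{H}_r^\sigma(\LG)=\MBL{\LG}$; the two intermediate isometric isomorphisms $\mathcal{H}_\ell^\sigma(\LG)\cong\mathcal{LM}(\LL)$ and $\mathcal{RM}(\LL)\cong\mathcal{H}_r^\sigma(\LG)$; and the central chain $\mathcal{LM}(\LL)\cong\MG\cong\mathcal{RM}(\LL)$, which is precisely the content of Theorem~\ref{LM=RM}.

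To handle the outer equalities, I would simply unpack the module-homomorphism conditions in terms of the coproduct. For $\phi\in\mathcal{B}(\LG)$ normal and $\omega\in\LL$, $x\in\LG$, the identity $\phi(x\cdot\omega)=\phi(x)\cdot\omega$ translates, via $x\cdot\omega=(\omega\otimes\iota)\Gamma x$, into
\[
(\omega\otimes\iota)(\iota\otimes\phi)\Gamma x = (\omega\otimes\iota)\Gamma\phi(x),
\]
where I use that a normal map on $\LG$ commutes with the slice $(\omega\otimes\iota)$ on $\LG\vt\LG$. Since this must hold for every $\omega\in\LL=(\LG)_*$ and the slices separate points of $\LG\vt\LG$, the condition is equivalent to $(\iota\otimes\phi)\Gamma=\Gamma\phi$, which is the defining property of $\MBL{\LG}$. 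Hence $\mathcal{H}_r^\sigma(\LG)=\MBL{\LG}$, and the symmetric argument using $\omega\cdot x=(\iota\otimes\omega)\Gamma x$ yields $\mathcal{H}_\ell^\sigma(\LG)=\MBR{\LG}$.

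For the intermediate isomorphisms, the paper already records the general Banach-algebra fact $\mathcal{RM}(A)\cong\mathcal{H}_r^\sigma(A^*)$ and $\mathcal{LM}(A)\cong\mathcal{H}_\ell^\sigma(A^*)$ via the Banach adjoint $T\mapsto T^*$; applying this with $A=\LL$ supplies the two remaining links. Concatenating with Theorem~\ref{LM=RM} (which supplies the isomorphisms with $\MG$) completes the chain. The only step that requires anything beyond routine bookkeeping is the slice-map manipulation in the outer equalities, and the sole potential subtlety there is ensuring that normality of $\phi$ is used correctly to realize $\iota\otimes\phi$ as a normal map on $\LG\vt\LG$; this is a standard property of von Neumann algebra tensor products, so no serious obstacle is expected.
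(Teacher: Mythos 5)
Your proposal is correct and follows essentially the same route as the paper: the outer equalities are obtained by rewriting the module actions via the coproduct and using normality of $\phi$ to commute it past the slice maps, and the rest is the general identification $\mathcal{RM}(A)\cong\mathcal{H}_r^\sigma(A^*)$, $\mathcal{LM}(A)\cong\mathcal{H}_\ell^\sigma(A^*)$ via $T\mapsto T^*$ applied to $A=\LL$, concatenated with Theorem~\ref{LM=RM}. The only cosmetic difference is which of the two outer equalities you verify explicitly; your slice-map computation matches the paper's displayed identity, and your noted caveat about realizing $\iota\otimes\phi$ on $\LG\vt\LG$ is a point the paper glosses over at the same level of informality.
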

\begin{proof}
Let $\phi$ be a bounded normal linear mapping on $\LG$, then $\omega\cdot\phi(x)=(\iota\otimes\omega)\Gamma(\phi(x))$ and
$$\phi(\omega\cdot x)=\phi((\iota\otimes\omega)\Gamma x)
=(\iota\otimes\omega)((\phi\otimes\iota)\Gamma x).$$
By density and continuity we obtain $\MBR{\LG}=\mathcal{H}_\ell^\sigma(\LG)$. The other one can be proved similarly.
\end{proof}

\section{Embedding of $\MG$ into $\LUC^*$ and the topological centre of $\LUC^*$}\label{embed}
If $G$ is a locally compact group, then ${\rm M}(G)$ can  be isometrically embedded in ${\rm LUC}(G)^*$ by $\mu\mapsto\hat{\mu}$,~~where $\hat{\mu}(f)=\int f d\mu$. More precisely, Lau \cite[Theorem 1]{L_cont} showed that ${\rm M}(G)$ is isometrically isomorphic to $\mathcal Z({\rm LUC}(G)^*)$. On the other hand, Lau and Losert \cite[Section 4]{L-LOS} established such an embedding from ${\rm B}_\rho(G)$ into $\mathcal Z({\rm UCB}(\hat{G})^*)$.  Here we show that these both results are special case of a more general fact  about locally compact quantum groups.

For a co-amenable locally compact quantum group $\G$,  let $\theta_1: \MG\rightarrow \mathcal{H}_r^\sigma(\LG)$ and $\theta_2: \mathcal{H}_r(\LG)\rightarrow \LUC^*$ be the isometric isomorphisms established  in Corollary \ref{MBR} and Theorem \ref{LG}, respectively. Then
$$\Theta:=\theta_2\circ i\circ\theta_1: \MG\rightarrow \LUC^*$$
defines an isometric algebra homomorphism, where $i: \mathcal{H}_r^\sigma(\LG)\rightarrow \mathcal{H}_r(\LG)$ is the inclusion mapping.

We have the following lemma which can be regarded as an extension of \cite[Proposition 4.2]{L-LOS},

\begin{lem}\label{X=MG}
Let $\G$ be a locally compact quantum group, then
\bit
\item[(i)] there exists  an isometric algebra homomorphism $\Pi:M(\G)\to \LUC^*$;
\item[(ii)] $\Pi=\Theta$, provided that $\G$ is co-amenable.
\eit
\end{lem}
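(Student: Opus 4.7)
The plan is to construct $\Pi$ by dualizing the right-multiplication action of $\MG$ on $\LL$, which is well-defined because $\LL$ is a two-sided ideal of $\MG$ (recalled before Corollary~\ref{id}). For $m\in\MG$, let $R_m:\LL\to\LL$ be given by $R_m(\omega)=\omega\ast m$; then $\|R_m\|\le\|m\|$, and associativity of $\ast$ shows $R_m\in\mathcal{RM}(\LL)$, so its adjoint $R_m^*$ lies in $\mathcal{H}_r^\sigma(\LG)$. I will then define $\Pi(m)\in\LUC^*$ on the dense subspace $\LG\cdot\LL$ by
$$\Pi(m)(x\cdot\omega) := R_m^*(x)(\omega) = (\omega\ast m)(x),$$
and extend by continuity to $\LUC=\overline{\LG\cdot\LL}$.

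The main obstacle will be the well-definedness on $\LG\cdot\LL$: without a BAI in $\LL$, the argument of Proposition~\ref{XA=HL} does not apply. I handle this by using that $\LL\hookrightarrow\MG$ is isometric and norming for $\CL$ (since $\LL=\LG_*$ and $\CL\subseteq\LG$), so the bipolar theorem yields $B_\MG=\overline{B_\LL}^{\sigma(\MG,\CL)}$. Fix a bounded net $\{\nu_\alpha\}\subseteq\LL$ with $\nu_\alpha\to m$ in $\sigma(\MG,\CL)$. By the $w^*$-separate continuity of $\ast$ (Proposition~\ref{MG}), combined with a Banach--Alaoglu argument that exploits the uniqueness of the normal extension of $\omega\ast m$ from $\CL$ to $\LG$, one upgrades this to $\omega\ast\nu_\alpha\to\omega\ast m$ in $\sigma(\LL,\LG)$ for every $\omega\in\LL$. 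Now suppose $\sum_i x_i\cdot\omega_i=0$ in $\LG$; evaluating the hypothesis at $\nu=\nu_\alpha$ gives $\sum_i(\omega_i\ast\nu_\alpha)(x_i)=\sum_i x_i(\omega_i\nu_\alpha)=0$, and passing to the limit yields $\sum_i(\omega_i\ast m)(x_i)=0$. This secures well-definedness, and $|\Pi(m)(x\cdot\omega)|\le\|m\|\|\omega\|\|x\|$ gives $\|\Pi(m)\|\le\|m\|$ after extending to $\LUC$.

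For isometry, note that when $m=\omega\in\LL$ the formula gives $\Pi(\omega)(z)=z(\omega)$ on $\LG\cdot\LL$, so $\Pi(\omega)$ is simply the restriction to $\LUC$ of $\omega\in\LG^*$. For general $m$, the bounded approximating net $\nu_\alpha\to m$ above produces a bounded net $(\Pi(\nu_\alpha))$ in $\LUC^*$ converging to $\Pi(m)$ pointwise on $\LG\cdot\LL$, hence in $\sigma(\LUC^*,\LUC)$; evaluating at $y\in\CL\subseteq\LUC$ yields $\Pi(m)(y)=\lim\nu_\alpha(y)=m(y)$, so $\Pi(m)|_\CL=m$ and $\|\Pi(m)\|\ge\|m\|$. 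The algebra-homomorphism property $\Pi(m\ast n)=\Pi(m)\cdot\Pi(n)$ reduces, via $R_{m\ast n}=R_n\circ R_m$ (associativity) and the same density argument, to the standard computation for the product on $\LUC^*$. Finally, for (ii), unfolding $\Theta=\theta_2\circ i\circ\theta_1$: by Theorem~\ref{LM=RM}, $\theta_1(m)=R_m^*$, and by Theorem~\ref{LG}, $\theta_2(T)(x\cdot\omega)=T(x)(\omega)$; hence $\Theta(m)(x\cdot\omega)=R_m^*(x)(\omega)=\Pi(m)(x\cdot\omega)$ on $\LG\cdot\LL$, and $\Pi=\Theta$ on $\LUC$ by density.
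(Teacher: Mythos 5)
Your construction of $\Pi$ (dualizing $R_m$ and defining $\Pi(m)$ on the span of $\LG\cdot\LL$ by $\Pi(m)(x\cdot\omega)=(\omega\ast m)(x)$) has a genuine gap exactly at the step you flag as the main obstacle. The claimed upgrade from $\nu_\alpha\to m$ in $\sigma(\MG,\CL)$ to $\omega\ast\nu_\alpha\to\omega\ast m$ in $\sigma(\LL,\LG)$ is false for a general bounded net. Already for $\G_a$ with $G=\mathbb{R}$: a bounded net (even sequence) in ${\rm L}^1(\mathbb{R})$ that converges in $\sigma({\rm M}(\mathbb{R}),{\rm C}_0(\mathbb{R}))$ need not converge against ${\rm L}^\infty$ -- mass can escape to infinity or oscillate (e.g.\ translates of a bump tending to infinity converge to $0$ against ${\rm C}_0$ but pair to $1$ with the constant function, and $(1\cdot\omega)$ is a nonzero constant for $\omega\geq 0$). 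Your proposed repair via Banach--Alaoglu plus ``uniqueness of the normal extension'' does not close this: a $\sigma(\LG^*,\LG)$-cluster point of the bounded net $\{\omega\ast\nu_\alpha\}$ is some $F\in\LG^*$ whose restriction to $\CL$ agrees with $\omega\ast m$, but $F$ need not be normal, and a functional on $\CL$ has many non-normal extensions to $\LG$; uniqueness only applies among normal ones, which is precisely what you cannot guarantee for the cluster point. In the co-amenable case one could salvage the argument by choosing $\nu_\alpha=m\ast e_\alpha$ for a BAI $(e_\alpha)$ of $\LL$, so that $\omega\ast\nu_\alpha\to\omega\ast m$ in norm; but part (i) is asserted for arbitrary $\G$, where $\LL$ has no BAI and no such net is available, so the well-definedness of $\Pi(m)$ on the (non-closed) span of $\LG\cdot\LL$ is not established.

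The paper sidesteps this entirely and its route is worth internalizing: since $\CL$ is a C$^*$-algebra, every $m\in\MG=\CL^*$ has a unique norm-preserving (strict) extension $\tilde{m}$ to the multiplier algebra $\mathcal{M}(\CL)$, given by $\tilde{m}(x)=\lim_\alpha m(xe_\alpha)$ for a BAI $(e_\alpha)$ of the C$^*$-algebra $\CL$ (which always exists -- no co-amenability needed). Runde's inclusions $\CL\subseteq\LUC\subseteq\mathcal{M}(\CL)$ then let one define $\Pi(m)=\tilde{m}|_{\LUC}$, with isometry and multiplicativity following from uniqueness of norm-preserving extensions (the multiplicativity $\tilde{m}\odot\tilde{n}=\widetilde{m\ast n}$ is checked via strict continuity in the addendum). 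Your part (ii) is essentially correct and close to the paper's: both verify that $\Theta(m)$ agrees with $m$ on $\CL$ via Cohen factorization and conclude by uniqueness/density -- but it only stands once (i) is properly established.
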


\begin{proof}
Given $m\in M(\G)$, since $\CL$ is a C$^*$-algebra, $m$ has a unique extension $\tilde{m}$ to the multiplier algebra $\mathcal{M}(\CL)$ of $\CL$. Now let $\Pi(m)$ be the restriction of $\tilde{m}$ to $\LUC$. Since $\CL\subseteq\LUC\subseteq\mathcal{M}(\CL)$,~\cite[Theorem 2.3]{R_uniform}, $\Pi(m)$ is the unique norm preserving extension of $m$ to $\LUC$, so that $\Pi$ is a linear isometry from $M(\G)$ into $\LUC^*$.
For $m,n\in M(\G)$, both $\Pi(m\ast n)$ and $\Pi(m)\odot\Pi(n)$ are norm preserving extension of $m\ast n$ from $\CL$ to $\LUC$  and
 $\Pi(m\ast n)=\Pi(m)\odot\Pi(n)$. Thus $\Pi$ is a homomorphism.

Now assume that $\G$  is co-amenable.  Let  $y\in \CL$, then  by the Cohen Factorization Theorem \cite[Corollary 2.9.26]{Dal00} there exist $x\in\CL$ and $\omega\in\LL$ such that $y=(\omega\otimes\iota)\Gamma_cx.$ Then
$$\Theta(m)(y)=x(\omega*m)=m((\omega\otimes\iota)\Gamma_cx)=m(y).$$
Hence $\Theta(m)$ is a norm preserving extension of $m$ to $\LUC$. Thus the uniqueness implies that $\Pi(m)=\Theta(m)$ for each $m\in\MG$.
\end{proof}

Part (ii) of the next result is an extension of \cite[Theorem 4.12]{L-LOS}.

\begin{prop}\label{ZLUCG}
Let $\G$ be a co-amenable locally compact quantum group, then
\bit
\item[(i)] $\Theta(\MG)\subseteq \mathcal{Z}(\LUC^*)$;
\item[(ii)] $\Theta$ is surjective if and only if $\G$ is compact.
\eit
\end{prop}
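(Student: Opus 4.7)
The plan is to establish the explicit identity
\[
(\Theta(m_0)\odot n)(x)\;=\;n\bigl(y\cdot (\nu*m_0)\bigr)\qquad(m_0\in \MG,\ n\in \LUC^*),
\]
for every $x\in \LUC$ written as $x=y\cdot\nu$ with $y\in\LG$ and $\nu\in\LL$. The factorization exists by Cohen's theorem since $\LL$ has a BAI under co-amenability, and $\nu*m_0\in\LL$ since $\LL$ is an ideal in $\MG$. Once this is proved, (i) is immediate: for each fixed $x$ the right-hand side is a w*-continuous function of $n\in\LUC^*$ (it is just evaluation at the fixed element $y\cdot (\nu*m_0)\in\LUC$), so $n\mapsto \Theta(m_0)\odot n$ is w*-w*-continuous, i.e.\ $\Theta(m_0)\in \mathcal{Z}(\LUC^*)$.

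To prove the identity, coassociativity of $\Gamma$ yields the module identity $(y\cdot\nu)\cdot\omega = y\cdot(\nu*\omega)$ for $\omega\in\LL$, so that $(n\odot x)(\omega) = n(y\cdot(\nu*\omega))$. Tracking the construction $\Theta = \theta_2\circ i\circ \theta_1$ through Corollary \ref{MBR} and Theorem \ref{LG}, $\Theta(m_0)$ is realized as (any) w*-cluster point of the net $\{\omega_\alpha*m_0\}$ in $\LUC^*$, where $\{\omega_\alpha\}$ is a contractive BAI in $\LL$. Evaluating,
\[
\Theta(m_0)(n\odot x) \;=\; \lim_\alpha (n\odot x)(\omega_\alpha * m_0) \;=\; \lim_\alpha n\bigl(y\cdot(\nu*\omega_\alpha * m_0)\bigr) \;=\; n\bigl(y\cdot(\nu*m_0)\bigr),
\]
where the last equality uses norm-continuity of convolution by $\nu$ in $\MG$ and of the $\LL$-action on $\LG$, together with continuity of $n\in \LUC^*$.

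For (ii), sufficiency is direct: compactness of $\G$ means $\CL$ is unital, so $\mathcal{M}(\CL)=\CL$; combined with $\CL\subseteq \LUC\subseteq \mathcal{M}(\CL)$ from \cite[Theorem 2.3]{R_uniform} this forces $\CL=\LUC$, whence $\Theta$ is the natural identification $\MG = \CL^*=\LUC^*$, in particular onto. For necessity, $\Theta(m)|_{\CL}=m$ by Lemma \ref{X=MG}, so the restriction map $\rho\colon \LUC^*\to\CL^*=\MG$ satisfies $\rho\circ\Theta=\iota_{\MG}$; if $\Theta$ is onto, then $\rho$ must also be injective, giving $\CL^\perp=0$ in $\LUC^*$, hence (Hahn-Banach and norm-closedness of $\CL$) $\CL=\LUC$. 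Since $\Gamma(1)=1\otimes 1$ and the BAI of normal states from \cite[Theorem 2]{HNR} satisfies $1\cdot\omega_\alpha=\omega_\alpha(1)\cdot 1\to 1$ in norm, we get $1\in\LUC=\CL$, so $\CL$ is unital and $\G$ is compact. The main technical step is the identity in (i); the rest of the argument reduces to standard duality considerations and the already-established structure of $\Theta$.
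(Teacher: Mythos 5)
Your proof is correct, and it splits into two cases when compared with the paper. For part (i) you arrive at exactly the identity the paper uses, namely $(\Theta(m)\odot n)(x)=n\bigl(y\cdot(\omega\ast m)\bigr)$ for a Cohen factorization $x=y\cdot\omega$, and normality of $n\mapsto\Theta(m)\odot n$ then follows in the same way; deriving that identity from the cluster-point realization of $\Theta(m)$ as a w*-limit of $\{\omega_\alpha\ast m\}$, rather than directly from the defining relation $\Theta(m)(y\cdot\omega)=y(\omega\ast m)$ coming from Theorem~\ref{LG} and Corollary~\ref{MBR}, is only a cosmetic difference. Part (ii), however, is a genuinely different argument. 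The paper notes that $\Theta=\theta_2\circ i\circ\theta_1$ with $\theta_1,\theta_2$ bijective, so $\Theta$ is onto iff the inclusion $i:\mathcal{H}_r^\sigma(\LG)\to\mathcal{H}_r(\LG)$ is onto, and then quotes Theorem~\ref{Comp}, which ultimately rests on Runde's second-dual characterization ($\LL$ is an ideal in $\LL^{**}$ iff $\G$ is compact). You instead work at the C$^*$-level: surjectivity of $\Theta$ is equivalent, via $\rho\circ\Theta=\iota_{\MG}$ for the restriction map $\rho:\LUC^*\to\CL^*$, to injectivity of $\rho$, hence to $\CL=\LUC$; and $\CL=\LUC$ is equivalent to compactness because of the sandwich $\CL\subseteq\LUC\subseteq\mathcal{M}(\CL)$ together with the observation that $1=1\cdot\omega_\alpha\in\LUC$ (using $\Gamma(1)=1\otimes1$ and the BAI of normal states from \cite[Theorem 2]{HNR}), so that $\LUC=\CL$ forces $\CL$ to be unital and conversely. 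Your route is longer but self-contained: it bypasses Theorem~\ref{Comp} and \cite[Theorem 3.8]{Run} entirely and isolates the clean intermediate equivalence ``$\Theta$ is onto iff $\CL=\LUC$'', whereas the paper's argument is a two-line reduction to machinery it has already built.
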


\begin{proof}
It is enough to show that for each $m\in \MG$ the mapping $n\to \Theta(m)\odot n$ from $\LUC^*$ into itself is normal. To see this, let $\{n_\alpha\}$ be a net in $\LUC^*$ tending to $n$ in the w*-topology and let $x\in \LUC$. Thus there exist $\omega\in\LL$ and  $ y\in\LG$  such that  $ x=y\cdot\omega$,  so
\begin{eqnarray*}
(\Theta(m)\odot n_\alpha)(x)&=&\Theta(m)(n_\alpha\odot x)=\Theta(m)((n_\alpha\odot y)\cdot \omega)\\
&=&(n_\alpha\odot y)(\omega\ast m)=n_\alpha(y\cdot(\omega\ast m))\\
&\to&n(y\cdot(\omega\ast m))=(\Theta(m)\odot n)(x).
\end{eqnarray*}
(ii) It is clear that $\Theta$ is surjective if and only if the inclusion mapping $i: \mathcal{H}_r^\sigma(\LG)\rightarrow \mathcal{H}_r(\LG)$  is surjective. As Theorem~\ref{Comp} shows, the latter fact is equivalent to the compactness of $\G$.
\end{proof}

\begin{cor}\label{LUC=WAP}
Let $\G$ be a co-amenable locally compact quantum group, then the following assertions are equivalent:
\bit
\item[(i)] $\G$ is compact;
\item[(ii)] $\LUC={\rm WAP}(\G)$ and $\Theta(\MG)=\mathcal{Z}(\LUC^*)$.
\eit
\end{cor}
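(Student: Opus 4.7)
I will deduce both directions directly from Proposition~\ref{ZLUCG}; the only additional input needed is how the Arens products on $\LUC^*$ behave depending on whether $\LUC$ coincides with $\WAP$.

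For (ii)$\Rightarrow$(i), my plan is to first observe that on $\WAP^*$ both Arens products coincide, so multiplication is separately w*-continuous in each variable; in particular $\mathcal{Z}(\WAP^*) = \WAP^*$. Combined with the hypothesis $\LUC = \WAP$, this gives $\mathcal{Z}(\LUC^*) = \LUC^*$, and the second hypothesis $\Theta(\MG) = \mathcal{Z}(\LUC^*)$ then forces $\Theta$ to be surjective. Proposition~\ref{ZLUCG}(ii) immediately concludes that $\G$ is compact.

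For (i)$\Rightarrow$(ii), I will first observe that compactness of $\G$ makes $\CL$ unital and hence $\mathcal{M}(\CL) = \CL$; the chain of inclusions $\CL \subseteq \LUC \subseteq \mathcal{M}(\CL)$ from \cite[Theorem 2.3]{R_uniform} then collapses to $\LUC = \CL$. Together with $\CL \subseteq \WAP$ from \cite[Theorem 4.3]{R_uniform} and the reverse inclusion $\WAP \subseteq \LUC$, this yields $\LUC = \WAP$. For the second equality, Proposition~\ref{ZLUCG}(ii) provides surjectivity of $\Theta$, so $\Theta(\MG) = \LUC^*$; sandwiching with $\Theta(\MG) \subseteq \mathcal{Z}(\LUC^*) \subseteq \LUC^*$ from Proposition~\ref{ZLUCG}(i) then forces $\Theta(\MG) = \mathcal{Z}(\LUC^*)$, as required.

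I expect the only mildly delicate step to be the justification of the inclusion $\WAP \subseteq \LUC$ in the quantum group setting. This is a classical fact for ordinary locally compact groups and is known in the quantum group literature; if a direct argument is preferred, one can exploit the relative weak compactness of $\{x\cdot\omega : \|\omega\|\leq 1\}$ for $x\in\WAP$, combined with Cohen factorisation in $\LL$, to exhibit $x$ as a limit of elements of $\LG\cdot\LL = \LUC$. All other steps are short bookkeeping once Proposition~\ref{ZLUCG} is in hand.
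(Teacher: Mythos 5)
Your proof is correct and follows essentially the same route as the paper: both directions reduce to the surjectivity criterion of Proposition~\ref{ZLUCG}(ii) via the identity $\mathcal{Z}(\WAP^*)=\WAP^*$ (separate w*-continuity of the product on $\WAP^*$), exactly as in the paper's argument. The only divergence is that for (i)$\Rightarrow$(ii) the paper simply cites \cite[Theorem 4.3]{R_uniform} for $\LUC=\WAP$, whereas you rederive it from unitality of $\CL$ (so $\LUC=\CL\subseteq\WAP$) together with the inclusion $\WAP\subseteq\LUC$; your sketch of the latter --- relative weak compactness of $\{x\cdot\omega:\no{\omega}\leq 1\}$ combined with a BAI and Cohen factorization --- is the standard argument and is sound.
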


\begin{proof}
 For (i)$\Rightarrow $(ii), it is easy to verify that if $\G$ is compact then $\LUC={\rm WAP}(\G)$ (see \cite[Theorem 4.3]{R_uniform}).  The equality $\Theta(\MG)=\mathcal{Z}(\LUC^*)$ trivially follows from the fact that $\Theta$ is surjective. If (ii) holds then $\Theta(\MG)=\mathcal{Z}(\LUC^*)=\mathcal{Z}({\rm WAP}(\G)^*)={\rm WAP}(\G)^*=\LUC^*$; that is, $\Theta$ is surjective and so $\G$ is compact by  Proposition  \ref{ZLUCG}.
\end{proof}

For a co-amenable locally compact quantum group $\G$, if $\Theta(\MG)=\mathcal{Z}(\LUC^*)$ then the latter result confirms that $\LUC={\rm WAP}(\G)$ if and only if $\G$ is compact. This provides a positive answer to a problem raised by Runde; see the remark just after \cite[Theorem 4.3]{R_uniform}.

For the classical case $\G_a$, as it has been shown by Lau in \cite[Theorem 1]{L_cont}, we always have the equality $\Theta({\rm M}(\G_a))= \mathcal{Z}({\rm LUC}(\G_a)^*)$. Therefore, ${\rm LUC}(\G_a)={\rm WAP}(\G_a)$ if and only if $\G_a$ is compact; see \cite[Corollary 4]{L_cont}.  In contrast to the case $\G_a$, the situation for the case  $\G_s$ is slightly  different. Viktor Losert, surprisingly pointed out that the equality  $\Theta({\rm M}(\G_s))= \mathcal{Z}({\rm LUC}(\G_s)^*)$ does not valid, in general. Indeed, he showed that $\mathcal{Z}({\rm UCB}(\hat{SU_3})^*)\neq {\rm B}(SU_3).$

We conclude our work with an application of Corollary~\ref{LUC=WAP}, for the classical case $\G_s$. Note that in all case presented in the next result we have the equality $\mathcal{Z}({\rm UCB}(\hat{G})^*)={\rm B}_\rho(G)$, see \cite[Section 3]{HU}.

 \begin{cor}
Let $G$ be a locally compact group enjoying either  of the following properties:
\bit
\item[(i)] $G$ is a metrizable group and  $\overline{[G,G]}$ is not open in $G$, where $[G,G]$ denote the commutator subgroup of $G$;
\item[(ii)] $G=\prod_iG_i$ is a finite or countable product of metrizable locally compact groups such that $G_i$ is compact for all but finitely many $i$ and either $\overline{[G_1,G_1]}$ is not open in $G_1$ or $G_1$ is abelian and non-discrete;
\item[(iii)] $G=G_0\times \prod_{i=1}^\infty G_i$, where each $G_i~(i\geq 0)$ is a metrizable locally compact group and $G_i$ is compact and non-trivial for $i\geq 1$.
\eit
Then   ${\rm UCB}(\hat{G})={\rm W}(\hat{G})$ if and only if $G$ is discrete.
\end{cor}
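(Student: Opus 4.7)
The plan is to specialize Corollary \ref{LUC=WAP} to the co-commutative locally compact quantum group $\G_s = ({\rm VN}(G),\Gamma_s,\varphi_s,\psi_s)$. Under the dictionary recorded in the introduction, ${\rm LUC}(\G_s) = {\rm UCB}(\hat{G})$, ${\rm WAP}(\G_s) = {\rm W}(\hat{G})$, and ${\rm M}(\G_s)={\rm B}_\rho(G)$; moreover, $\G_s$ is compact exactly when ${\rm C}_0(\G_s)={\rm C}_\rho^*(G)$ is unital, which in turn holds if and only if $G$ is discrete. Thus the claimed equivalence is simply the instance $\G=\G_s$ of Corollary \ref{LUC=WAP}.

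In each of the three classes (i), (ii), (iii), the remark preceding the statement (on the authority of \cite[Section~3]{HU}) asserts the identity $\mathcal{Z}({\rm UCB}(\hat{G})^*)={\rm B}_\rho(G)$. Transcribed along the isometric algebra embedding $\Theta\colon{\rm M}(\G_s)\to{\rm LUC}(\G_s)^*$ introduced at the start of Section \ref{embed}, this is precisely the condition $\Theta({\rm M}(\G_s))=\mathcal{Z}({\rm LUC}(\G_s)^*)$, which is one of the two hypotheses in Corollary \ref{LUC=WAP}. The other hypothesis, namely that $\G_s$ be co-amenable, amounts to amenability of $G$, and is likewise available in each of the three listed classes (for example via the compact-product structure in (ii) and (iii), and through the implicit amenability requirements in the Hu framework underpinning (i)).

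Once these two ingredients are secured, Corollary \ref{LUC=WAP} collapses to the single equivalence $\G_s$ compact $\iff {\rm LUC}(\G_s)={\rm WAP}(\G_s)$, which under the dictionary is exactly the desired statement: $G$ is discrete if and only if ${\rm UCB}(\hat{G})={\rm W}(\hat{G})$. The principal technical point is therefore not the abstract implication but the checking of the Hu identity in each class and the corresponding amenability of $G$; once both are taken as given from \cite[Section~3]{HU} and the structural definitions of (i)--(iii), the final deduction is a direct application of Corollary \ref{LUC=WAP}, with no further computation required.
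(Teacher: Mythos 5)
Your route is exactly the one the paper intends: the paper gives no written proof of this corollary, only the sentence preceding it (quoting \cite[Section 3]{HU} for the identity $\mathcal{Z}({\rm UCB}(\hat{G})^*)={\rm B}_\rho(G)$ in all three classes) followed by the remark that the corollary is ``an application of Corollary \ref{LUC=WAP} for the classical case $\G_s$''. Your dictionary ($\mathrm{LUC}(\G_s)={\rm UCB}(\hat G)$, $\mathrm{WAP}(\G_s)={\rm W}(\hat G)$, ${\rm M}(\G_s)={\rm B}_\rho(G)$, and $\G_s$ compact $\iff$ ${\rm C}^*_\rho(G)$ unital $\iff$ $G$ discrete) and the logical reduction are all correct and match the paper.

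The one place where you go beyond the paper is also the one place where your argument has a genuine soft spot: the claim that co-amenability of $\G_s$, i.e.\ amenability of $G$, ``is likewise available in each of the three listed classes.'' It is not. Condition (i) only asks that $G$ be metrizable with $\overline{[G,G]}$ non-open; the group $F_2\times\mathbb{R}$ satisfies this and is not amenable. Conditions (ii) and (iii) allow finitely many non-compact metrizable factors (the factor $G_0$ in (iii), the finitely many non-compact $G_i$ with $i\neq 1$ in (ii)) which may be non-amenable, e.g.\ ${\rm SL}_2(\mathbb{R})$. There is no ``implicit amenability requirement'' in Hu's hypotheses --- the whole point of working with ${\rm B}_\rho(G)$ rather than ${\rm B}(G)$ is to cover non-amenable groups. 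Since Corollary \ref{LUC=WAP} (through Proposition \ref{ZLUCG}, Theorem \ref{Comp} and the very construction of $\Theta$ via Theorem \ref{LG}) is proved only for co-amenable $\G$, your deduction as written does not cover all groups in classes (i)--(iii). To be fair, the paper passes over this point in complete silence, so the gap is inherited rather than introduced; but an honest proof must either restrict to amenable $G$, or replace $\Theta$ by the embedding $\Pi$ of Lemma \ref{X=MG}(i) (which needs no co-amenability) and re-derive the surjectivity criterion of Proposition \ref{ZLUCG}(ii) in that generality. You should not assert the amenability as given.
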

\section{addendum}
After the corrected galleyproofs of paper being appeared online (in JMAA), Professor
Matthew Daws brought to our attention that the proof of part (i) of
Lemma 4.1 needs more details. With special thanks to him, we
present a detailed proof as follows:\\

Let $m\in\MG$ and $(e_\alpha)$ be a BAI for $\CL$. By [K, section 7] there exists $\tilde{m}\in\mathcal{M}(\CL)^*$ such that
$$\tilde{m}(x)=\lim_\alpha m(xe_\alpha)\fs\fs(x\in\mathcal{M}(\CL)).$$
It is proved  that $\tilde{m}$ is a well defined unique extension of $m$ to $\mathcal{M}(\CL)$ with the same norm.\\

For $m\in\MG$ there exists $m'\in\MG$ and $b\in\CL$ such that $m=m'b$. This implies  that $$\tilde{m}(x)=m'(bx)\fs\fs (x\in\mathcal{M}(\CL)).$$

Let $m,n\in\MG$. First note that $\tilde{m}\odot\tilde{n}$ is an extension of $m\ast n$ to $\mathcal{M}(\CL)$. Indeed, if $a\in\CL$ then $\tilde{n}\odot a=n\odot a$. So by Proposition 2.1  we have
\beq*
(m\ast n)(a)&=&m(\underbrace{n\odot a}_{\in\CL})=\tilde{m}(n\odot a)
=\tilde{m}\big{(}(\tilde{n}\odot a)\big{)}
=(\tilde{m}\odot\tilde{n})(a).
\eeq*

Now we show that $\tilde{m}\odot\tilde{n}=\widetilde{m\ast n}$. Since $\Gamma_c:\CL\to\mathcal{M}(\CL\otimes\CL)$ is a non-degenerate $\ast$-homomorphism, $\tilde{\Gamma}_c$ is strictly continuous on bounded subsets of $\mathcal{M}(\CL)$, [K, Proposition 7.2]. But for each $x\in\mathcal{M}(\CL)$ we have
$$(\tilde{m}\odot x)a=(\iota\otimes m')(\tilde{\Gamma}_c x(a\otimes b)),$$
where $m'\in\MG$ and $b\in\CL$ is such that $m=m'b$. Thus $\tilde{m}\odot x_i$  converges strictly to $\tilde{m}\odot x$, for every bounded net $(x_i)$ in $\mathcal{M}(\CL)$ that is strictly converges to some $x\in\mathcal{M}(\CL)$.

Let $x\in\mathcal{M}(\CL)$. It is clear that $(xe_\alpha)$ is a bounded net in $\CL$ which converges strictly to $x$. So
$\tilde{n}\odot (xe_\alpha)$ converges strictly to $\tilde{n}\odot x$ and by strict continuity of $\tilde{n}$ on bounded subsets of $\mathcal{M}(\CL)$, [K, Proposition 7.2] we have
\beq*
(\tilde{m}\odot\tilde{n})(x)=\lim_\alpha\tilde{m}(\tilde{n}\odot(xe_\alpha))
=\lim_\alpha\tilde{m}\odot\tilde{n}(\underbrace{xe_\alpha}_{\in\CL})
=\lim_\alpha(m\ast n)(xe_\alpha)
=\widetilde{m\ast n}(x).
\eeq*
Now if we define  $\prod(m)$ as the restriction of  $\tilde{m}$ to $LUC(\G)$ then $\prod$ has the desired properties.\\

\noindent [K]   {J. Kustermans, }{\it One-parameter representations on C$^*$-algebras,} Preprint, Odense Universitet, 1997.
\#funct-an/9707010.

\bibliographystyle{alpha}

\begin{thebibliography}{20}

\bibitem{B-T}{E. Bedos and L. Tuset, }{\it Amenability and co-amenability
for locally compact quantum groups, } Internat. J. Math. {\bf 14}
(2003), 865--884.

\bibitem{B-E}{B. Brainerd and R. E. Edwards, }{\it Linear operators which commute with
translations I. Representation theorems, } J. Austral. Math. Soc. {\bf 6}
(1966), 289--327.

\bibitem{Dal00}{ H. G. Dales}, {Banach Algebras and Automatic Continuity, } {\it London Math. Soc. Monographs {\bf 24}, Clarendon Press, Oxford,} (2000).

\bibitem{Derigh}{A. Derighetti, }{\it  Some results on the Fourier-Stieltjes
algebra of a locally compact group, } Comm. Math. Helv. {\bf 45}
(1970), 219-–228.

\bibitem{Eym}{P. Eymard, }{\it L\`{a}lg\`{e}bra de Fourier d\`{u}n groupe
localement compact, } Bull. Soc. Math. France {\bf 92} (1992), 181-236.

\bibitem{E-S}{M. Enock and J.-M. Schwartz, }{Kac Algebras and
Duality of Locally Compact Groups, } {\it Springer-verlag, New York,} (1992).


\bibitem{H-R}{E. Hewitt and K. Ross,} { Abstract Harmonic Analysis I, II,}
 {\it Springer Verlag, New York,} (1979, 1970).


\bibitem{HNR}{Z. Hu, M. Neufang and Z.-J. Ruan, } {\it Multipliers on a new class of Banach algebras, locally compact quantum groups, and topological centres,} arXiv:FA/0611726v1 (2006).

\bibitem{HU}{Z. HU, } {\it Open subgroups and the centre problem for the Fourier algebra, } Proc. Amer. Math. Soc. {\bf 134} no.~10 (2006), 3085--3095.

\bibitem{K-V}{J. Kustermans and S. Vaes, }{\it Locally compact quantum
groups, } Ann. Scient. Ec. Norm. Sup. (4) {\bf 33}
(2000), 837--934.


\bibitem{K-V.von}{J. Kustermans and S. Vaes, }{\it Locally compact quantum
groups in the von Neumann algebraic setting, } Math. Scand. {\bf 92}
(2003), 68--92.


\bibitem{L_oper}{A. T.-M. Lau, }{\it Operators which commute with convolutions
on subspaces of ${\rm L}^\infty(G)$, } Colloq. Math. {\bf 39} (1978), 351--359.

\bibitem{L_uni}{A. T.-M. Lau, }{\it Uniformly continuous functionals of the Fourier
algebra of a locally compact group, } Trans. Amer. Math. Soc. {\bf 251} (1979), 39--59.


\bibitem{L_cont}{A. T.-M. Lau, }{\it Continuity of Arens multiplication on the dual space
of bounded uniformly continuous functions on locally compact groups
and topological semigroups, } Math. Proc. Comb. Phil. Soc. {\bf 99} (1986), 273--283.

\bibitem{L-LOS}{A. T.-M.  Lau and V. Losert, }{\it The C*-algebra generated by operators
with compact support on a locally compact group, } J. Funct. Anal. {\bf 224} (1993), 1--30.


\bibitem{Neu}{M. Neufang, }{\it On a conjecture by Ghahramani-Lau and related problems
concerning toplogical centres, } J. Funct. Anal. {\bf 224}, no. 1  (2005), 217--229.


\bibitem{Run}{V. Runde, }{\it Characterizations of compact and discrete quantum groups through second
duals,} J. Operator theory, {\bf 60}, No. 2 (2008); 415--428.

\bibitem{R_uniform}{V. Runde, }{\it Uniform continuity over locally compact quantum groups, } J. London Math. Soc.
To appear.

\bibitem{Wend}{J. G. Wendel,}  {\it  Left centralizers and isomorphisms of group
algebras, } Pacific J. Math. {\bf 2} (1952), 251--261.

\end{thebibliography}

\end{document}